\newcommand{\ZZ}{\mathbb{Z}}
\newcommand{\RR}{\mathbb{R}}
\newcommand{\QQ}{\mathbb{Q}}
\newcommand{\floor}[1]{\left\lfloor #1 \right\rfloor}
\newcommand{\lra}{\longrightarrow}
\newcommand{\Bl}[2]{B_{#1}\left(#2\right)}
\renewcommand{\phi}{\varphi}
\newcommand{\cbc}{Mahler-Bernoulli class}
\newcommand{\shift}{\mathcal{S}}
\newcommand{\qp}{\ensuremath{\text{``}p\text{''}}}
\theoremstyle{plain}
\newtheorem{theorem}{Theorem}
\newtheorem{lemma}[theorem]{Lemma}
\newtheorem{prop}[theorem]{Proposition}
\newtheorem{rem}[theorem]{Remark}
\newtheorem{cor}[theorem]{Corollary}
\theoremstyle{definition}
\newtheorem{defn}[theorem]{Definition}
\newtheorem{ex}[theorem]{Example}
\numberwithin{theorem}{section}
\title[$p$-adic Shift and Applications]{Dynamics of the $p$-adic Shift and Applications}
\author{James Kingsbery}
\address{Williams College, Williamstown, MA 01267}
\email{06jck@williams.edu}
\author{Alex Levin}
\address{Massachusetts Institute of Technology, Cambridge, MA, 02138}
\email{levin@mit.edu}
\author{Anatoly Preygel}
\address{Massachusetts Institute of Technology, Cambridge, MA, 02138}
\email{preygel@mit.edu}
\author{Cesar E. Silva}
\address{Williams College, Williamstown, MA 01267}
\email{csilva@williams.edu}
\begin{document}
\maketitle

\section{Introduction}\label{sec:intro}
In recent years, several authors have started studying the dynamics that 
result from various maps  on the $p$-adics.  In many cases they have shown that
relatively simple and natural  transformations satisfy important dynamical properties, 
such as ergodicity. 

In this paper, we continue this line of research by  studying
  Bernoulli transformations  on the $p$-adic integers.   Bernoulli 
transformations are those that can be identified with the ``left shift'' of infinite (to the right) sequences on some alphabet. They are ubiquitous throughout the field of dynamics, and come
up in numerous guises in several branches of mathematics.    In the realm of measurable dynamics, where we are free to disregard sets of measure zero, 
the map $T_n:[0,1] \to [0,1]$ for $n$ a positive integer, 
taking   $x$ to $n x\bmod1$ (i.e., the fractional part of $nx$), provides a simple example of a Bernoulli (noninvertible) transformation.  Taking base-$n$ expansions of $x$, and ignoring the (measure zero) set of redundant expansions $.\overline{n-1}$, one sees that indeed this map is just a left shift.

Moving to the $p$-adic context, the aim of this paper is to   present a novel way of realizing
the Bernoulli shift on $p$ symbols,  where $p$ is some prime.
 We do this
by starting with the most natural realization: Any $x\in \ZZ_p$ has a unique (possibly infinite) 
expansion of the 
form $x=\sum_{i=0}^{\infty} b_i p^i$ ($b_i\in\{0,1,\ldots,p-1\}$); one can define the ``$p$-adic shift'' 
$\shift:\ZZ_p \to \ZZ_p$ to be a left shift on this expansion, that is $\shift(x) = \sum_{i=0}^{\infty} 
b_{i+1} p^i$.  By showing that suitably small perturbations of $\shift$ are still Bernoulli, we can find 
many ``nice'' maps, such as polynomials, that behave like the shift map $\shift$ in this way. 
(We originally discussed these maps in \cite{polybern}; this paper presents a novel and more direct way
of obtaining them, as we will  remark below.) 
Because we are
working on the $p$-adics,  our proofs use divisibility
properties of  certain polynomials, and thus, 
we obtain a connection between dynamics and number theory.  

In addition to allowing us to obtain aesthetically pleasing and natural
representations of Bernoulli shifts and connecting the study to number-theoretic
properties like divisibility, working in the $p$-adic setting 
is appealing; dynamics over the $p$-adics
 has recently become an active area of research.  Our work was motivated by the article
 \cite{bs05}, which studied the measurable dynamics of polynomial 
maps on $\ZZ_p.$  The authors in  \cite{bs05} asked when polynomial 
maps can satisfy the measurable dynamical property of being mixing.   
 (It turns out that this was known to Woodcock and Smart,
who   showed in \cite{WoodcockSmart}  that the polynomial 
 map $x \mapsto \tfrac{x^p-x}{p}$ defines a Bernoulli, hence mixing,  
 transformation on $\ZZ_p$.)  In \cite{polybern}, the authors gave a 
 detailed account of the  dynamics that can result from a certain 
 well-behaved class of maps on the $p$-adics. In particular, we introduced a set of conditions on the Mahler expansion of a transformation on the $p$-adics which are sufficient for it to be Bernoulli (see Definition~\ref{cbc}).

 The sufficiency of these conditions was proved in \cite{polybern} via a structure theorem for so-called ``locally scaling'' transformations (see Definition~\ref{defn:ls}).   The {\cbc} conditions mentioned above were (roughly) that the transformation be a small enough perturbation of $x \mapsto \binom{x}{p}$ (which are part of the Mahler basis), and so applying the structure theorem required a careful study of the dynamics of the map $x \mapsto \binom{x}{p}$.  The approach of the present paper turns out to be more direct than that of \cite{polybern}, to which it is in a sense dual: we begin with the easily understood dynamics of $\shift$ and work to better understand its Mahler expansion, proving that it is ``a small enough perturbation'' of $x \mapsto \binom{x}{p}$.  Since we are working in an ultrametric setting, this means that the {\cbc} condition may be reformulated as exactly those things that are small enough perturbations of $\shift$, giving a more conceptual reason why they should be Bernoulli!

This paper is organized as follows. After a brief review of the 
$p$-adics, we begin by studying the $p$-adic shift and its 
properties. 
 Section~\ref{sec:perturb} is devoted to developing our 
machinery and proving the main result. We introduce locally scaling transformations (similarly to 
our discussion in \cite{polybern}) and then show that 
what we mean when we say that certain maps are small perturbations of the shift map. 
 At the end of the section,
we prove that certain locally scaling transformations are indeed small perturbations of the 
shift map, and hence Bernoulli.  In particular, we define the {\cbc} and show that 
all members thereof satisfy these properties.   Finally,  in 
Section~\ref{sec:related} we briefly talk about maps related to the 
$p$-adic shift.

We have discussed locally scaling transformations and their dynamical properties 
in \cite{polybern}.  However, the interpretation that certain locally scaling transformations 
are in some sense small perturbation on the shift map, and therefore remain Bernoulli,
is original to this paper (and is its main contribution).

Some basic familiarity 
with the $p$-adics, as provided, for example, in \cite{gouvea} or 
\cite{robert}, will be helpful. The monograph \cite{Khr04} may be consulted for an 
introduction to $p$-adic dynamics. For measurable dynamics properties such as mixing the 
reader may consult \cite{Silva08}. Finally, 
\cite{silverman} introduces more extensive connections between dynamics and arithmetic.

 \subsection*{Acknowledgments}
This paper is based on research by
the Ergodic Theory group of the 2005 SMALL  summer
research project at Williams College.
Support for the project was provided by National Science Foundation
REU Grant DMS -- 0353634 and the Bronfman Science Center of Williams College.

\section{The $p$-adics}\label{padicintro}

Throughout the paper, we fix a prime $p$.  
\subsection{The $p$-adic integers $\ZZ_p$}
The $p$-adic integers generalize the notion of base-$p$ expansion
of nonnegative integers.
For any nonnegative integer $n,$ we can write down a base-$p$  
expansion of $n,$ i.e., an expression $n=b_0 + b_1 p + b_2 p^2 
+\cdots+b_N p^N$ where the $b_i$ are in $\{0,1,\ldots,p-1\}.$
Thus, the nonnegative
integers are precisely the finite power series in $p$ with coefficients in $\{0,1,\ldots,p-1\}.$  

The $p$-adic integers $\ZZ_p$ can be thought of as infinite series in 
a formal variable $\qp,$ with the same coefficients as above: 
i.e., as  a set we can define
\[ \ZZ_p \stackrel{\text{def}}{=} \left\{ a_0 + a_1 \qp + \cdots + 
a_i {\qp}^i + \cdots = \sum_i a_i {\qp}^i : a_i \in 
\{0,1,\ldots,p-1\} \text{ for all $i \geq 0$}\right\}. \]  We can 
turn $\ZZ_p$ into a metric space by introducing  a distance function $d: 
\ZZ_p \times \ZZ_p \to \RR,$ where
\[ d\left( \sum_i a_i {\qp}^i, \sum_i b_i {\qp}^i\right) = 
\begin{cases} 0 & \text{if $a_i = b_i$ for all $i \geq 0$} \\ p^{-k} 
& \text{ where $k=\min\{i \geq 0: a_i \neq b_i\}$, otherwise} 
\end{cases} \]
In other words, the distance between two points is small if their 
their expansions agree to many terms. 
One checks that $d(\cdot,\cdot)$ is indeed a metric on $\ZZ_p$, 
making it a complete metric space.  We may also define a function 
$\left| \cdot \right|: \ZZ_p \to \RR$ by
\[ \left| \sum_i a_i {\qp}^i \right| = d\left(\sum_i a_i {\qp}^i, 
\sum_i 0 {\qp}^i\right) = \begin{cases} 0 & \text{if $a_i = 0$ for 
all $i \geq 0$} \\ p^{-k} & \text{where $k=\min\{i \geq 0: a_i \neq 
0\}$, otherwise} \end{cases} \]  This will be our discrete valuation, 
though we can't make sense of this notion without defining the ring 
structure.  We will alternately refer to $\left|\cdot\right|$ as the 
$p$-adic absolute value. It is not hard to see that for any $u,v\in 
\ZZ_p,$ we have
$d(u,v)=|u-v|.$

\begin{rem}We note that  $\ZZ_p$ is also often constructed as 
the metric space completion of $\ZZ$ with respect to the $p$-adic 
valuation $\left| \cdot \right|$.  However, we would still need the 
``base $p$ expansions,'' so might as well start from this viewpoint.
\end{rem}

We now define the ring structure on $\ZZ_p.$ 
To do this, we  consider the functions $\pi_k: \ZZ_p \to \ZZ/p^k \ZZ$ 
given by
\[ \pi_k\left( \sum_i a_i {\qp}^i \right) = \left(\sum_{i=0}^{k-1} 
a_i p^i \right) + p^k \ZZ. \]  Observe that they are compatible with 
the natural projections $\ZZ/p^k \ZZ \to \ZZ/p^\ell \ZZ$ for $\ell < 
k$, and that
\[ \bigcap_{k \geq 0} \pi_k^{-1}\left(\pi_k\left( \sum_i a_i {\qp}^i 
\right)\right) =\left\{ \sum_i a_i {\qp}^i \right\}. \]
This allows us to conclude that there is a unique ring structure on 
$\ZZ_p$ for which each $\pi_k$ is a ring homomorphism.  (In fact, 
 readers familiar with  inverse limits can see
that   the 
$\pi_k$ realize $\ZZ_p$ as the inverse limit 
$\mathop{\varprojlim}_{k} \ZZ/p^k \ZZ$ where $\ZZ/p^k \to 
\ZZ/p^{\ell}$ is the natural projection for $\ell < k$.)

The verification of this fact is not hard from the above two 
observations, so we will simply describe the ring operations rather 
than give a formal proof: To compute the first $k$ coefficients of 
\[ \left(\sum_i a_i {\qp}^i  \right) + \left( \sum_i b_i {\qp}^i 
\right) \qquad \left(\text{resp., }  \left(\sum_i a_i {\qp}^i  
\right) \cdot \left( \sum_i b_i {\qp}^i \right) \right) \] one 
applies $\pi_k$ to each term, adds (resp., multiplies) the resulting 
elements of $\ZZ/p^k \ZZ$, and takes the first $k$ digits of the base 
$p$-expansion of any representative class of the result (noting that 
the first $k$ digits are independent of choice of representative). 

With this description, we see that the function $\iota: \ZZ \to 
\ZZ_p$ given by taking an integer to its formal base $p$ expansion is 
precisely the unit ring homomorphism $n \mapsto n \cdot 1$.  So, 
suppose $\sum_i a_i {\qp}^i$ is such that $a_i = 0$ for $i \gg 0$.  
Then, the sum $\sum_i a_i p^i$ makes sense as an integer and \[ 
\sum_i a_i {\qp}^i = \iota\left( \sum_i a_i p^i \right) = \sum_i 
\iota(a_i) \iota(p)^i. \]

Suppose now that $\sum_i a_i {\qp}^i \in \ZZ_p$ is arbitrary.  It is 
the limit, in the metric space topology, of its truncated expansions 
(formal partial sums).  Thus we can, and will, forever remove the 
quotes around ${\qp}$ and regard the infinite sum as a convergent sum 
rather than a formal expression, all with no ambiguity.  Thus, we 
have defined $\ZZ_p$ as a complete discrete valuation ring; it
is not hard to see that $\ZZ_p$ has $\ZZ$ as a dense subset. 

We 
may also wish to ask what the units in $\ZZ_p$ are (i.e., for what 
$u\in\ZZ_p$ does there exist a $v$ in $\ZZ_p$ such that $uv=1$). It 
is not too difficult to show that $\ZZ_p^\times=\{u\in 
\ZZ_p:|u|=1\},$ i.e., all $p$ adic integers with nonzero $p^0$ 
coefficient.

\subsection{Some $p$-adic properties}
As a metric space,  $\ZZ_p$  satisfies some unusual properties. First 
of all, we have the \emph{strong triangle inequality}: if $u,v,w\in 
\ZZ_p,$ then it is the case that $|u-v|\leq\max(|u|,|v|).$ From this, 
it is not hard to see that if $|u|>|v|,$ then $|u+v|=|u|.$
Furthermore, this implies that a series $\sum a_m$ converges if and 
only if $|a_n|\to 0.$

In our work, balls in the $p$-adic integers play a fundamental role. 
We define the (closed) ball of radius $r$ about $u\in \ZZ_p$ as  
$\Bl{r}{u}=\{v\in\ZZ_p:|u-v|\leq r\}.$ Because $p$-adic distances are 
all powers of $p,$ we may take $r=p^i$ for some $i$ without loss of 
generality.  Notice that if  $u=a_0+a_1 p + a_2 p^2 + 
\cdots\in\ZZ_p,$ then $\Bl{p^{-i}}{u} = a_0 + a_1 p +a_2 p^2 
+\cdots+a_{i-1}p^{i-1} + p^i \ZZ_p,$ i.e., $\Bl{p^{-i}}{u}$ consists of
 all elements of $\ZZ_p$ 
agreeing with the $p$-adic expansion of $u$ to the $p^{i-1}$  term. 
Using the strong triangle inequality, it is easy to see that if   $u' 
\in \Bl{p^i}{u},$ then $\Bl{p^{-i}}{u}=\Bl{p^i}{u'}$ (thus ``every point 
in a ball is its center''), and further, that if two balls intersect, 
then one is contained in the other. The $p$-adic integers form the 
unit ball 
around any element of $\ZZ_p.$

Additionally, it is not difficult to see that for $j \geq i,$ each 
ball of radius $p^{-j}$ contains $p^{j-i}$ disjoint balls of radius 
$i.$
In particular, each ball is totally bounded,
and being closed, is compact.

\subsection{Brief introduction to $\QQ_p$}
The set $\QQ_p$ is the field  of fractions
of $\ZZ_p,$ i.e.,   $$\QQ_p \stackrel{\text{def}}{=}\ZZ_p[1/p]=\bigcup_{i\geq0}p^{-i}\ZZ_p.$$
Any element of $\QQ_p$ can be written as a Laurent series in $p,$ 
with finitely many negative powers but possibly infinitely many 
positive ones.  
Alternately, just like $\ZZ_p$ is the completion of 
$\ZZ$ under the $p$-adic metric, 
$\QQ_p$ is the completion of $\QQ$ under this same metric. 
(To define $|\cdot|$ on $\QQ,$ note that any element of $\QQ$ can be 
written uniquely as $s=p^\ell a/b,$ where $p$ divides neither
$a$ nor $b$; in this case, $|s| = p^{-\ell}$.) 

The set $\QQ_p$ possesses many of the same properties
as $\ZZ_p$ (for example, the strong triangle inequality).

\subsection{Examples}
\begin{ex}
In the $2$-adics, $1+2+4+8+\cdots=1/(1-2)=-1.$ Indeed, we can see 
that the difference between each successive partial sum and $-1$ 
becomes divisible by increasing powers of $2,$ and consequently, 
becomes ``smaller.''

What if we had wanted to derive the $2$-adic representation of $-1$? 
Notice that for each $m,$ we have 
$-1=1+2+2^2+\cdots+2^{m-1}\bmod2^m.$ The complete $2$-adic expansion 
follows.
\end{ex}
\begin{ex}
Let us try to find a zero of the polynomial $q(x) = x^2 + 1$ in $\ZZ_5.$ 
Notice that $q(2) = 5 \in \Bl{1/5} {0}.$ 
So, $x=2$ is an approximation  to a zero of $q.$

How do we get a better approximation?  More precisely, we would like to find a ``small'' 
$t$ such that $q(2+t) \in \Bl{1/25}{0}.$ By ``small,'' we mean small in the $5$-adic sense; in
this case, we will require $t\equiv0\bmod5.$ 

Let us write down  the formal Taylor  
series $q(2+t) = q(2) + t q'(2) + \cdots,$ where the dots represent terms 
of order at least $2$ in $t$ (hence, a contribution that is equal to $0$ mod $25$).  We  notice
 that $q'(2) \neq0\bmod 5,$ and so, we see that $$t=\frac{q(2+t)-q(t) }{q'(2)}+\cdots.$$ We
 require $q(2+t)\equiv0\bmod25,$ and know that $q(2) \equiv 5\bmod 25.$  Because $q'(2)=4\not\equiv 0\bmod5,$
 this determines $t$ modulo $25$; the concealed terms  are all $0$ modulo $25,$
and thus do not affect the result
at this stage.   In particular, $t\equiv 20 /4\bmod25,$ so $t\equiv 5\bmod25.$ 
And indeed, $q(2+1\cdot5) \equiv 0 \bmod25.$ 
 
 Proceeding in this way, we can find $a_0,a_1,a_2,\ldots\in\{0,1,2,3,4\}$ 
 such that 
 $q(a_0+a_1 5 +a_2 5^2 +\cdots +a_k 5^k)\equiv0\bmod 5^{k+1}.$ It follows
 that for $\alpha = a_0 + a_1 5+a_2 5^2 +\cdots\in \ZZ_5,$ we have $q(\alpha) = 0,$
since the partial sums provide successively better approximations for the zero of $q.$  

This example demonstrated a  specific instance of \emph{Hensel's Lemma}, a general result
about finding zeros of polynomials in $\ZZ_p$ \cite{gouvea}.

\end{ex}

\section{The $p$-adic Shift}
The $p$-adic shift  $f:\ZZ_p\lra\ZZ_p$ is defined as follows. If
$x=b_0+b_1p+b_2p^2+\cdots,$ where the $b_i\in\{0,1,\ldots,p-1\},$ we 
let $\shift(x)=b_1+b_2p+b_3p^2+\cdots.$ We immediately see that  if $\shift^k$
denotes the $k$-fold iterate of $\shift,$ then we have that 
$\shift^k(x)=b_k+b_{k+1}p+\cdots.$ Moreover, 
 for $x\in\ZZ,$ it is the case that $\shift^k(x)=\floor{x/p^k}$ 
where $\floor{\cdot}$ is the greatest integer 
function. 

One of the most basic properties of the shift is that it is
continuous as a function of $\ZZ_p.$ Indeed, if $|x-y|<1/p^{k+1},$ it 
is not hard to see that 
$|\shift(x)-\shift(y)|<1/p^k.$ (Recall that $\left|\cdot\right|$ is the $p$-adic
absolute value.)

Continuity is important because by Mahler's Theorem, 
any continuous $T:\ZZ_p\lra\ZZ_p$ can be expressed in the form of a 
uniformly convergent series, called its \emph{Mahler 
Expansion}:
\begin{equation}
T(x)=\sum_{n=0}^\infty a_n \binom{x}{n}
\end{equation} 
where 
\begin{equation}\label{eq:findiff}a_n=\sum_{i=0}^n(-1)^{n+i}T(i){n\choose
    i} \in \ZZ_p\end{equation}
\begin{rem}
For an arbitrary function $T:\ZZ_p\to\ZZ_p,$ we can write down the 
formal identity 
$T(x)=\sum_{n=0}^\infty a_n{x\choose n}$ for coefficients 
$a_n\in\ZZ_p$ to be determined.  Then, substituting in 
$x=0,1,2,\ldots$ in turn, we can inductively determine that the $a_n$ 
would have to be as  in 
(\ref{eq:findiff}), noting that only finitely many summands will be 
non-zero at each stage.  The true content of  Mahler's Theorem is 
that for $T$ continuous on $\ZZ_p,$ this series converges, which 
happens if and only if   $|a_n|\to0$ as $n\to\infty$ by convergence properties on 
the $p$-adics.   
\end{rem}

Our first goal is to study the Mahler expansion of $\shift^k.$ Throughout this paper,
we let $a_n^{(k)}$ be the $n^{\text{th}}$ Mahler 
coefficient 
of $\shift^k.$ In other words, the $a_n^{(k)}$ are defined such that 
\begin{equation}\shift^k(x) = \sum _{n=0}^\infty a_n^{(k)} \binom{x}{n}.\end{equation} 

\begin{theorem}\label{thm:mahlercoeffsTOLY}
The coefficients $a_n^{(k)}$ satisfy the following properties:
\begin{enumerate}
\item[(i)] $a_n^{(k)} = 0$ for $0 \leq n < p^k$;
\item[(ii)] $a_n^{(k)} = 1$ for $n = p^k$;
\item[(iii)] Suppose $j \geq 0$.  Then, $p^j$ divides $a_n^{(k)}$ for 
$n>j p^k - j + 1$ (and so, $|a_n^{(k)}| \leq 1/p^j$).
\end{enumerate}
\end{theorem}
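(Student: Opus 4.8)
The plan is to understand the Mahler coefficients $a_n^{(k)}$ via the finite-difference formula \eqref{eq:findiff}, combining it with the explicit description of $\shift^k$ on nonnegative integers, namely $\shift^k(i) = \floor{i/p^k}$. Recall that for a function $T$, the Mahler coefficients are the values of the forward difference operator $\Delta^n T$ at $0$, where $\Delta T(x) = T(x+1) - T(x)$. So I would set $g_k(i) = \floor{i/p^k}$ and study $\Delta^n g_k(0) = a_n^{(k)}$. The key structural fact is that $g_k$ is a step function that increments by $1$ each time $i$ crosses a multiple of $p^k$; equivalently, $\Delta g_k(i) = g_k(i+1) - g_k(i)$ equals $1$ exactly when $p^k \mid (i+1)$ and $0$ otherwise.

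For parts (i) and (ii), I would argue directly. Since $\Delta^n$ raises the "order of vanishing" in a suitable sense, and $g_k$ is constant (equal to $0$) on $\{0,1,\dots,p^k-1\}$, every finite difference $\Delta^n g_k(0)$ with $n < p^k$ only involves these values $g_k(0),\dots,g_k(n)$, all of which are $0$; this gives (i). For (ii), I would compute $\Delta^{p^k} g_k(0) = \sum_{i=0}^{p^k}(-1)^{p^k+i}\binom{p^k}{i} g_k(i)$; since $g_k(i)=0$ for $i<p^k$ and $g_k(p^k)=1$, only the top term survives, giving $\binom{p^k}{p^k}=1$. These two parts I expect to be routine.

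The substance is part (iii), the divisibility $p^j \mid a_n^{(k)}$ for $n > jp^k - j + 1$. The natural approach is to exploit the self-similar structure of $g_k$ relative to the shift by $p^k$. A convenient device is the identity $\Delta^{p^k} g_k = g_{k} \circ (\,\cdot + \text{something}\,)$ made precise: I would try to show that applying $\Delta^{p^k}$ to $g_k$ produces again (up to a translate) a function of the same type but "one level up" in divisibility, so that iterating $\Delta^{p^k}$ roughly $j$ times introduces a factor of $p^j$. More concretely, I expect a recursion of the form relating $a_n^{(k)}$ to lower coefficients, perhaps by writing $g_k(i) = \floor{i/p^k}$ and using that $\binom{i}{n}$ with $n$ large forces high divisibility of the relevant binomial sums modulo $p^j$. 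The cleanest route is likely a congruence argument: reduce \eqref{eq:findiff} modulo $p^j$ and show the alternating sum vanishes once $n$ exceeds the threshold, using Kummer/Lucas-type control on the $p$-adic valuations of the binomial coefficients $\binom{n}{i}$ together with the step structure of $g_k$.

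The hard part will be pinning down the exact threshold $n > jp^k - j + 1 = j(p^k - 1) + 1$; the "$-j+1$" correction is delicate and suggests the bound comes from carefully counting how many of the $p^k$-spaced "jumps" of $g_k$ can be absorbed when taking $n$ finite differences. I anticipate the proof will proceed by induction on $j$, with the inductive step showing that raising $n$ past each successive block of size $p^k - 1$ (not $p^k$, because the jumps are offset) gains one additional power of $p$; getting the off-by-one bookkeeping exactly right, rather than the gross order of magnitude, is where the real care is needed.
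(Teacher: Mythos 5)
Your treatment of (i) and (ii) is correct and complete: the finite-difference formula together with $\shift^k(i)=\floor{i/p^k}$ immediately gives $a_n^{(k)}=0$ for $n<p^k$ and $a_{p^k}^{(k)}=\binom{p^k}{p^k}=1$. But for (iii) --- which is the entire content of the theorem --- you have written a research plan, not a proof. You propose three possible mechanisms (a self-similarity recursion, a direct congruence attack on \eqref{eq:findiff} via Kummer/Lucas control of $\binom{n}{i}$, and an induction on $j$) without carrying out any of them, and you yourself flag that the exact threshold $n> jp^k-j+1$ is the delicate point you have not resolved. Worse, the tool you lean on most explicitly is misdirected: the relevant divisibility is not of the binomials $\binom{n}{i}$ appearing in \eqref{eq:findiff} (whose $p$-adic valuations oscillate and do not obviously force the alternating sum $\sum_i(-1)^{n+i}\binom{n}{i}\floor{i/p^k}$ to vanish mod $p^j$ at the stated cutoff), but of the binomials $\binom{p^k}{i}$ for $0<i<p^k$.

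The missing idea can be stated precisely, and it is close to the self-similarity you gestured at. From $g_k(i+p^k)=g_k(i)+1$ one gets the operator identity $\bigl((1+\Delta)^{p^k}-1\bigr)g_k=1$, and applying $\Delta^m$ ($m\geq1$) and evaluating at $0$ yields the recursion
\[
\sum_{i=1}^{p^k}\binom{p^k}{i}\,a_{m+i}^{(k)}=0,
\qquad\text{i.e.}\qquad
a_n^{(k)}=-\sum_{i=1}^{p^k-1}\binom{p^k}{i}\,a_{\,n-p^k+i}^{(k)}\quad(n>p^k),
\]
where every coefficient $\binom{p^k}{i}$, $0<i<p^k$, is divisible by $p$, and every backward step in the index is of size at most $p^k-1$. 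Induction on $j$ then gives exactly the threshold: if $n>p^k+(j-1)(p^k-1)=jp^k-j+1$, each index $n-p^k+i$ exceeds $(j-1)p^k-(j-1)+1$, so $p^{j-1}$ divides those coefficients by induction and the extra factor of $p$ gives $p^j\mid a_n^{(k)}$. This is where the ``$-j+1$'' comes from: each power of $p$ costs at most $p^k-1$, not $p^k$, in degree. The paper reaches the same recursion in generating-function form, following Elkies's derivation of Mahler's theorem: it computes $F(t)=\sum_n\shift^k(n)t^n=t^{p^k}/\bigl((1-t)(1-t^{p^k})\bigr)$, deduces $A(u)=\sum_n a_n^{(k)}u^n=u^{p^k}/\bigl((1+u)^{p^k}-u^{p^k}\bigr)$, writes $(1+u)^{p^k}-u^{p^k}=1+pR(u)$ with $\deg R=p^k-1$ and no constant term, and expands geometrically: modulo $p^j$ the series is a polynomial of degree $p^k+(j-1)(p^k-1)=jp^k-j+1$, giving (i)--(iii) in one stroke. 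Until you fix one concrete mechanism and verify the threshold arithmetic, your (iii) remains unproved.
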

\begin{proof}
Our proof is closely based on Elkies's short derivation of Mahler's 
Theorem \cite{elkiesmahler}. 
Let
\[ F(t) = \sum_{n \geq 0} \shift^k(n) t^n \in 
\ZZ_p[[t]]\qquad\text{and}\qquad A(u) = \sum_{n \geq 0} a_n^{(k)} u^n 
\in \ZZ_p[[u]]. \]
Recall the standard power series identities \[ \frac{1}{1-t} = 
\sum_{i \geq 0} t^i \qquad\text{and}\qquad\frac{t}{(1-t)^2} = \sum_{i 
\geq 1} i t^i.\]  Using them and the definition of $\shift^k,$ we may 
compute that
\begin{align*}
F(t) &= \sum_{n \geq 0} \shift^k(n) t^n =  \sum_{a=0}^{p^k-1} \sum_{b \geq 
0} b t^{a+b p^k} \\
&= \left(\sum_{a=0}^{p^{k}-1} t^a \right)\left(\sum_{b \geq 0} b 
(t^{p^k})^b \right) = \left(\frac{1-t^{p^k}}{1-t}\right) 
\left(\frac{t^{p^k}}{(1-t^{p^k})^2}\right) \\
&= \frac{t^{p^k}}{(1-t)(1-t^{p^k})}
\end{align*} 

Before proceeding, we remark that   \begin{equation}A(u) = 
\frac{1}{1+u} 
F\left(\frac{u}{1+u}\right).\label{eq:A(u)}\end{equation}  Indeed,  
 suppose $T: \ZZ_p \to \QQ_p$ is any map.  Set $\tilde F(t) = \sum_{n 
 \geq 0} T(n) t^n$ and  $\tilde A(u) = \sum_{n \geq 0} a_n u^n$ where 
 $a_n$ is such that $\sum_i a_i \binom{n}{i} = T(n)$ for all $k \geq 
 0$.  Then, 
\begin{align*}\tilde F(t) &= \sum_{n \geq 0} \sum_{i=0}^{n} a_i 
\binom{n}{i} t^n = \sum_{i \geq 0} a_i \sum_{n \geq i} \binom{n}{i} 
t^n 
= \sum_{i \geq 0} a_i \frac{t^i}{(1-t)^{i+1}} \\
&= \left(\frac{1}{1-t}\right) \sum_{i \geq 0} a_i 
\left(\frac{t}{1-t}\right)^i = \frac{1}{1-t} \tilde 
A\left(\frac{t}{1-t} \right) \end{align*}
From this, (\ref{eq:A(u)}) follows by taking $\tilde F=F,$ $\tilde 
A=A,$ and $t=u/(1+u).$

Now, note that because $p|\binom{p^k}{i}$ for all $0<i<p^k,$ we have  
$(1+u)^{p^k} - u^{p^k} = 1 + p R(u)$ where $R(u)$ is a polynomial of 
degree $p^k-1$ in $u$ without leading term, so that
\[ A(u) =  \frac{1}{1+u} F\left(\frac{u}{1+u}\right) = 
\frac{u^{p^k}}{(1+u)^{p^k}-u^{p^k}} = u^{p^k}\left(1 + p R(u) + p^2 
R(u)^2 + p^3 R(u)^3 + \cdots \right) \]  Since $R(u)$ has no leading 
term, we can now conclude (i) and (ii).  The case $j=0$ of (iii) is 
trivial, so we may assume $j \geq 1$.  Working modulo $p^j$ we obtain 
the equality (in $\ZZ_p/p^j \ZZ_p[[u]]$)
\[ A(u) \equiv u^{p^k} \left(1 + p R(u) + \cdots + p^{j-1} 
R(u)^{j-1}\right) \pmod{p^j} \]  Note that the right hand side is a 
polynomial of degree $p^k + (j-1)(p^k-1) = j p^k -j + 1$.  This 
allows us to conclude (iii).
\end{proof}

As we will see, the next corollary will be of fundamental importance 
in the following section, 
where we try to bound coefficients in the Mahler expansions of maps 
related to the shift. 
\begin{cor}\label{cor:shiftcoeffs}
The maximum possible value for $p^{\floor{\log_p n}}|a_n^{(k)}|$
is $p^k$ and it is attained only when $n=p^k.$
\end{cor}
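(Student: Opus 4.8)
The plan is to read off the corollary as a quantitative consequence of the three parts of Theorem~\ref{thm:mahlercoeffsTOLY}, organizing the argument according to whether $n$ is below, equal to, or above $p^k$. For $n < p^k$, part (i) gives $a_n^{(k)} = 0$, so $p^{\floor{\log_p n}}|a_n^{(k)}|$ vanishes. For $n = p^k$, part (ii) gives $|a_n^{(k)}| = 1$ while $\floor{\log_p n} = k$, so the quantity equals exactly $p^k$, realizing the claimed maximum. Thus everything reduces to establishing, for each $n > p^k$, the strict inequality $p^{\floor{\log_p n}}|a_n^{(k)}| < p^k$.

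For this main case I would set $m = \floor{\log_p n}$, so that $p^m \le n$ and, because $n > p^k$, also $m \ge k$. The idea is to feed part (iii) an exponent $j$ calibrated to the size of $n$: I would take $j = m - k + 1 \ge 1$ and check that the hypothesis $n > j p^k - j + 1 = (m-k+1)(p^k-1)+1$ of part (iii) is satisfied. Granting this, (iii) yields $|a_n^{(k)}| \le p^{-(m-k+1)}$, and hence $p^m |a_n^{(k)}| \le p^{k-1} < p^k$, which is what we want.

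The heart of the matter, and the one step demanding genuine care, is verifying $n > (m-k+1)(p^k-1)+1$. When $m > k$, writing $\ell = m - k \ge 1$ it suffices to combine $n \ge p^m$ with the elementary estimate $p^{k+\ell} > (\ell+1)(p^k-1)+1$; rearranged, the difference of the two sides equals $p^k(p^\ell - \ell - 1) + \ell$, which is positive since $p^\ell \ge 2^\ell \ge \ell + 1$ for $\ell \ge 1$. I expect the boundary case $m = k$ to be the main obstacle: there $(m-k+1)(p^k-1)+1 = p^k$ exactly, so the bound $n \ge p^m = p^k$ is too weak and the estimate is tight. What rescues it is the \emph{strict} standing hypothesis $n > p^k$ of this case, which gives $n > p^k = (m-k+1)(p^k-1)+1$ outright. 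Once this tightness at $m=k$ is isolated — it is the unique place where one must invoke $n > p^k$ rather than merely $n \ge p^{\floor{\log_p n}}$ — the remaining cases fall to the crude exponential-versus-linear bound above, and assembling the three cases shows the maximum is $p^k$, attained only at $n = p^k$.
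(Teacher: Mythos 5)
Your proof is correct and follows essentially the same route as the paper's: both arguments reduce everything to Theorem~\ref{thm:mahlercoeffsTOLY}, invoking part (iii) with the exponent $j=\floor{\log_p n}-k+1$ and the elementary bound $p^{\ell}\ge \ell+1$, and both isolate the range $p^k<n<p^{k+1}$ (your $m=k$ case, the paper's $\ell=0$ case) as the place where the strict hypothesis $n>p^k$ must be used. The only cosmetic difference is that the paper first proves the non-strict bound (divisibility by $p^{\ell}$) and then upgrades to $p^{\ell+1}$, whereas you go directly to the strict inequality.
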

\begin{proof}
We may assume $a_n^{(k)} \neq 0$.  Let $v_p(a_n^{(k)})$ denote the 
integer $\ell$ so that $p^\ell$ precisely divides $a_n^{(k)}$.  We 
are asked to prove that $\floor{\log_p n} - v_p(a_n^{(k)}) \leq k$ 
with equality if and only if $n = p^k$.  Setting $\ell = 
\floor{\log_p n} - k$, we are thus to show that $p^\ell$ divides 
$a_n^{(k)}$ and $p^{\ell+1}$ divides it unless $n=p^k$.

Note that $p^\ell \geq \ell$ so that \[ n \geq p^{\floor{\log_p n}}= 
p^\ell p^k \geq \ell p^k \geq \ell p^k - \ell + 1 \] and applying 
Theorem~\ref{thm:mahlercoeffsTOLY} yields that $p^{\ell}$ does in 
fact divide $a_n^{(k)}$.

If $\ell > 0$, then $p^\ell \geq \ell+1$ so that we in fact have $n 
\geq (\ell+1) p^k \geq (\ell+1) p^k - (\ell+1) + 1$ and so 
Theorem~\ref{thm:mahlercoeffsTOLY} yields that $p^{\ell+1}$ does in 
fact divide $a_n^{(k)}$.  Since $a_n^{(k)}=0$ for $n < p^k$, by 
Theorem~\ref{thm:mahlercoeffsTOLY}, it suffices to prove that $p$ 
divides $a_n^{(k)}$ for $n > p^k$; but this is immediate from 
Theorem~\ref{thm:mahlercoeffsTOLY}.
\end{proof}

\section{Perturbing the Shift}\label{sec:perturb}
\subsection{Local Scaling} The  shift $\shift$ cuts off the first digit 
term in the $p$-adic expansion
of $x\in\ZZ_p.$ Notice that if the expansions of $x$ and $y$ agree
past the     the first digit (i.e., $|x-y|\leq1/p$), $\shift$ multiplies 
the distance between them
by $p.$ Thus, $\shift$ scales distances between points that are close
enough. 

This observation motivates the following definition from \cite{polybern}.
\begin{defn}\label{defn:ls}
We say that $T:\ZZ_p\lra\ZZ_p$ is \emph{locally scaling} with scaling
radius $r$ and scaling constant $C\geq1,$ and write that $T$ is
$(r,C)$ locally scaling   if for all $x,$ $y$ with $|x-y|\leq r,$ 
$|T(x)-T(y)|=C|x-y|.$ We will always  assume, without loss of 
generality, that $r=p^\ell$ and $C=p^m,$ where  $\ell\leq0$   
and $m\geq0.$
\end{defn} 

\begin{prop}
The map $\shift^k$ is $(p^{-k},p^k)$ locally scaling.
\end{prop}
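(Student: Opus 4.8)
The plan is to show directly that $\shift^k$ multiplies distances by exactly $p^k$ whenever two points are within the scaling radius $p^{-k}$. The key observation is that the shift acts on $p$-adic expansions in a transparent way, so I would work entirely with the digit expansions rather than with Mahler coefficients or any analytic machinery.

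First I would fix $x = \sum_{i \geq 0} b_i p^i$ and $y = \sum_{i \geq 0} c_i p^i$ with $|x - y| \leq p^{-k}$. By the description of balls in the excerpt, the condition $|x-y| \leq p^{-k}$ means precisely that $b_i = c_i$ for all $0 \leq i \leq k-1$; that is, the expansions of $x$ and $y$ agree through the $p^{k-1}$ term. Since $\shift^k(x) = \sum_{i \geq 0} b_{i+k} p^i$ and $\shift^k(y) = \sum_{i \geq 0} c_{i+k} p^i$, these two truncations simply delete the first $k$ digits, which are identical, and re-index the remaining digits. Hence the first place at which the expansions of $\shift^k(x)$ and $\shift^k(y)$ can differ is shifted down by exactly $k$.

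To make the scaling exact I would argue by cases. If $x = y$ then trivially $|\shift^k(x) - \shift^k(y)| = 0 = p^k |x-y|$. Otherwise, let $j = \min\{i : b_i \neq c_i\}$, which by the above is the smallest index of disagreement; since $|x-y| \leq p^{-k}$ we have $j \geq k$, and by definition $|x-y| = p^{-j}$. The digits of $\shift^k(x)$ and $\shift^k(y)$ are $b_{i+k}$ and $c_{i+k}$, so they first disagree at index $i = j - k \geq 0$, giving $|\shift^k(x) - \shift^k(y)| = p^{-(j-k)} = p^k \cdot p^{-j} = p^k |x-y|$. This establishes the required equality $|\shift^k(x) - \shift^k(y)| = p^k |x-y|$ for all such pairs, which is exactly the statement that $\shift^k$ is $(p^{-k}, p^k)$ locally scaling.

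I do not expect a serious obstacle here, since everything reduces to bookkeeping on digit indices; the only point requiring a little care is confirming that the minimum index of disagreement transforms correctly under the shift, i.e., that deleting $k$ agreeing leading digits drops the valuation of the difference by exactly $k$ rather than by something larger. This follows because the deleted digits are equal and the definition of $\shift^k$ re-indexes the tail faithfully, so no cancellation or carrying can occur to alter the location of the first disagreement. One could alternatively phrase this more slickly using the formula $\shift^k(x) = \floor{x/p^k}$ on $\ZZ$ together with density and continuity, but the direct digit-expansion argument is cleanest and avoids any appeal to approximation.
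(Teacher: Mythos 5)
Your proof is correct and is exactly the digit-expansion argument the paper has in mind: the paper's own proof is simply ``Immediate,'' relying on the observation made just before Definition~\ref{defn:ls} that the shift deletes agreeing leading digits and so multiplies the distance between sufficiently close points by $p$. Your write-up merely makes explicit the index bookkeeping (first disagreement at $j \geq k$ maps to first disagreement at $j-k$), with no gap.
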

\begin{proof}
Immediate.
\end{proof}
Notice that if $T $ is a  locally scaling map, it is  continuous, and 
for $r'\leq r,$
the restriction $T\left.\right|_{\Bl{r'}{x}}$ is injective into
$\Bl{Cr'}{T(x)}.$  It is  
surjective as well.

\begin{prop}
For $T$ an $(r,C)$ locally scaling map and $r'\leq r,$ the restricted 
map
$T\left.\right|_{\Bl{r'}{x}}:\Bl{r'}{x}\lra\Bl{Cr'}{T(x)}$ is a 
bijection.
\end{prop}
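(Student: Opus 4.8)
The plan is to show that the map $T|_{\Bl{r'}{x}}$ is both injective and surjective onto $\Bl{Cr'}{T(x)}$. Injectivity is essentially immediate from the locally scaling property: if $y, z \in \Bl{r'}{x}$ with $y \neq z$, then by the strong triangle inequality $|y - z| \leq r' \leq r$, so the scaling hypothesis gives $|T(y) - T(z)| = C|y-z| > 0$, whence $T(y) \neq T(z)$. The same computation shows that $T$ maps $\Bl{r'}{x}$ \emph{into} $\Bl{Cr'}{T(x)}$, since for any $y \in \Bl{r'}{x}$ we have $|T(y) - T(x)| = C|y - x| \leq Cr'$.

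The real content is surjectivity, and this will be the main obstacle. First I would observe that $\Bl{r'}{x}$ is compact (each ball in $\ZZ_p$ is compact, as established in the $p$-adic properties section) and $T$ is continuous (a locally scaling map is continuous, as remarked just before the proposition). Therefore the image $T(\Bl{r'}{x})$ is a compact, hence closed, subset of $\Bl{Cr'}{T(x)}$. The plan is then to show this closed image is also dense, which forces it to be all of $\Bl{Cr'}{T(x)}$.

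To get density, the natural approach is a counting/volume argument exploiting the rigidity of the ultrametric. Writing $r' = p^{-i}$ and $C = p^m$, the target ball $\Bl{Cr'}{T(x)} = \Bl{p^{m-i}}{T(x)}$ decomposes into exactly $p^{(m-i+j)-(m-i)} = p^{j}$ disjoint sub-balls of radius $p^{-(i-m)-j} = p^{m-i-j}$ for any $j \geq 0$ (using the fact from the excerpt that a ball of radius $p^{-j}$ contains $p^{j-i}$ disjoint balls of radius $p^{-i}$). Correspondingly I would cover the source ball $\Bl{p^{-i}}{x}$ by its $p^{j}$ disjoint sub-balls of radius $p^{-i-j}$, and note that each such sub-ball, being of radius $\leq r$, is carried by the scaling property \emph{into} a ball of radius $p^{m}\cdot p^{-i-j} = p^{m-i-j}$ around its image — exactly the radius of the target sub-balls. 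The decisive point is that distinct source sub-balls, whose centers differ by more than $p^{-i-j}$, have images whose centers differ by exactly $C$ times that, i.e.\ by more than $p^{m-i-j}$, so their image balls are disjoint and thus land in distinct target sub-balls. Matching $p^{j}$ disjoint image balls injectively into $p^{j}$ target sub-balls forces the matching to be a bijection onto the set of target sub-balls, so every target sub-ball is met. Letting $j \to \infty$ shows the image is dense, completing the argument.

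An alternative to the counting argument, which may read more cleanly, is a direct successive-approximation (Hensel-style) construction: given a target $w \in \Bl{Cr'}{T(x)}$, build a Cauchy sequence $x = x_0, x_1, x_2, \ldots$ in $\Bl{r'}{x}$ with $|T(x_n) - w| \leq C p^{-i-n}$, using surjectivity onto each successively smaller target sub-ball at each stage, then pass to the limit $x_\infty$ and invoke continuity to conclude $T(x_\infty) = w$. I would likely present the compactness-plus-counting version as primary, since the disjointness bookkeeping is the one spot requiring genuine care and it is precisely where the ultrametric scaling hypothesis does its work.
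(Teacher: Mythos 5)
Your proposal is correct and takes essentially the same approach as the paper: injectivity and containment in $\Bl{Cr'}{T(x)}$ follow directly from the scaling identity, density of the image is obtained by the same counting/pigeonhole argument matching equally many sub-balls of corresponding radii in source and target, and compactness plus continuity upgrades the closed dense image to all of $\Bl{Cr'}{T(x)}$. The only cosmetic differences are that you make the count explicit as $p^{j}$ where the paper calls it $\eta$, and your appended successive-approximation sketch is an optional variant the paper does not pursue.
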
 
\begin{proof}
Let $S=T\left.\right|_{\Bl{r'}{x}}.$ Because injectivity of $S$ is 
clear, we just prove surjectivity. We first
demonstrate that  the image of $S$ is dense in $\Bl{Cr'}{T(x)}$ by
showing that for every ball $B\subset\Bl{Cr'}{T(x)}$ there exists
$w\in\Bl{r'}{x}$ such that $T(w)\in B.$ Say the radius of $B$ is
$p^{-j}\leq Cr'$ for $j\in\ZZ_{\geq0}.$ 

Assume furthermore that  there are $\eta$ balls of radius 
$(1/C)\,p^{-j}$ contained in $\Bl{r'}{x}.$
 Pick one point in each of these balls of radius $(1/C)\,p^{-j}$      
 (the \emph{representative} of that ball). Because these $\eta$ 
 representatives are all at least $(1/C)\,p^{-j}$ apart from one 
 another and $(1/C)\,p^{-j}\leq r',$ their images have to be 
at least $p^{-j}$ apart from one another by local scaling. Thus, they
 have to occupy $\eta$ distinct  balls of radius $p^{-j}$ in the
 range. 
Finally, because the number of balls of radius $p^{-j}$ contained in $
\Bl{Cr'}{T(x)}$ is also $\eta,$  one of the representatives in 
$\Bl{r'}{x}$ must map into $B$ by the pigeonhole principle. 

Now that we have the image of $S$ being dense in $\Bl{Cr'}{T(x)},$ we 
note that $S$ is continuous and $\Bl{r'}{x}$ is compact, so 
$S(\Bl{r'}{x})$ must be compact hence closed. 
Thus it is all of $\Bl{Cr'}{T(x)}.$ Hence, our map is surjective and 
therefore bijective.
\end{proof}

\subsection{$(p^{-k},p^k)$ locally scaling maps}
We now turn our attention to a special case of local scaling, 
the $(p^{-k},p^k)$ locally scaling maps. 
We will see that they behave ``nicely'' under preimages.
More precisely, when one takes successive preimages of a given ball, 
we get a union of smaller balls that are very evenly distributed 
throughout $\ZZ_p.$  

Let $T$ be such a map. In this case, each of the $p^k$ balls of 
radius $p^{-k}$ is 
mapped bijectively onto $\ZZ_p.$ It follows that given a ball 
$B\subset\ZZ_p,$ of radius $p^{-j}$ its preimage is 
$$T^{-1}(B)=\bigsqcup_{i=1}^{p^k} B_i$$ where each $B_i$ is a ball of 
radius $p^{-(j+k)},$ and the $B_i$ are contained in distinct balls of 
radius $p^{-k}.$  Furthermore, if $B'\subset B$ is a ball of radius 
$p^{-j'}\leq p^{-j},$ then 
$$T^{-1}(B')=\bigsqcup_{i=1}^{p^k} B_i'$$ where each $B_i'$ is a ball 
of radius $p^{-(j'+k)},$ and each $B_i'$ is contained in $B_i.$ We 
call this very nice property of preimages the \emph{nesting property} 
(see Figure \ref{fig:nesting} for an illustration). 

\begin{figure}
\includegraphics[width=3in]{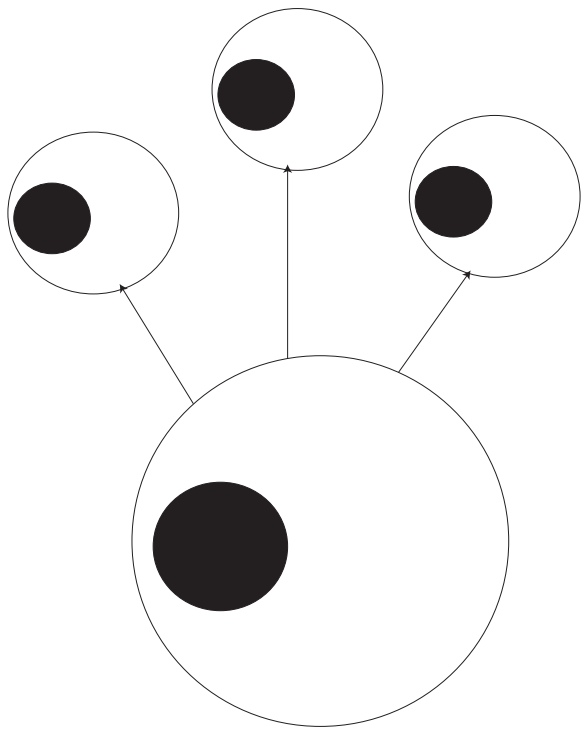}
\caption{\label{fig:nesting} An illustration of the local nesting 
property.  Let 
$B$ be the big ball at the bottom of the figures; the arrows lead to 
its preimages $B_1,$ $B_2,$ and $B_3.$ The shaded ball inside $B$ is 
some ball $B'.$ Its preimages $B_1',B_2'$ and $B_3'$ each fall within 
one of the $B_i.$}
\end{figure}

Finally, the next two lemma will play a role in our main result.
\begin{lemma}\label{lem:bcforballsinsideballs}
Suppose that $B$ is a ball of radius $p^{-k j}.$ Then, $T^{-1}(B) $ 
consists of a union
of $p^{k}$ balls of radius $p^{-k(j+1)},$ each of them in a distinct 
ball of radius $p^{-kj}.$ 
\end{lemma}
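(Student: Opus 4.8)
The plan is to recognize that Lemma~\ref{lem:bcforballsinsideballs} is essentially a direct specialization of the nesting property already established for $(p^{-k},p^k)$ locally scaling maps. First I would note that the lemma is stated for a ball $B$ of radius $p^{-kj}$, which is simply the general preimage statement with the radius $p^{-j'}$ replaced by the specific value $p^{-kj}$. Since $T$ is assumed to be $(p^{-k},p^k)$ locally scaling, each of the $p^k$ balls of radius $p^{-k}$ partitioning $\ZZ_p$ maps bijectively onto $\ZZ_p$ by the surjectivity proposition. Thus every ball $B\subseteq\ZZ_p$ has exactly one preimage component inside each of these $p^k$ balls of radius $p^{-k}$.

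The key computation is to track how the radius scales under taking preimages. By local scaling, $T$ multiplies distances by $p^k$ on each ball of radius $p^{-k}$; equivalently, the bijective restriction $T|_{\Bl{p^{-k}}{x}}:\Bl{p^{-k}}{x}\to\ZZ_p$ contracts radii by a factor of $p^{-k}$ when passing to preimages. Hence the preimage of a ball of radius $p^{-kj}$ within a fixed ball of radius $p^{-k}$ is a ball of radius $p^{-kj}\cdot p^{-k}=p^{-k(j+1)}$, which is exactly the radius claimed in the lemma. This accounts for both the count ($p^k$ balls) and their sizes.

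Finally I would address the claim that each of these $p^k$ preimage balls lies in a distinct ball of radius $p^{-kj}$. This follows because the $p^k$ preimage components sit inside $p^k$ distinct balls of radius $p^{-k}$ (one per bijective branch), and since $p^{-k(j+1)}\leq p^{-kj}\leq p^{-k}$ for $j\geq 0$, a ball of radius $p^{-k(j+1)}$ is contained in a unique ball of radius $p^{-kj}$. Because distinct balls of radius $p^{-k}$ cannot share a common sub-ball of the larger radius $p^{-kj}$ (as $p^{-kj}\leq p^{-k}$ forces each such intermediate ball to lie in a single radius-$p^{-k}$ ball), the $p^k$ preimages indeed occupy distinct balls of radius $p^{-kj}$.

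I do not anticipate a serious obstacle here, as the statement is a clean instance of the already-developed nesting machinery; the only point requiring care is the bookkeeping of radii, ensuring that the factor $p^{-k}$ contraction combines correctly with the starting radius $p^{-kj}$ to yield $p^{-k(j+1)}$, and verifying the inequality $j\geq 0$ so that all the containments among balls of radii $p^{-k(j+1)}$, $p^{-kj}$, and $p^{-k}$ hold as needed.
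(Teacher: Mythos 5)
Your proof is correct up to one index, and it handles the one part of the lemma that the paper actually proves by a genuinely different mechanism. The paper's proof takes the decomposition $T^{-1}(B)=\bigsqcup_{i=1}^{p^k}B_i$ into $p^k$ balls of radius $p^{-k(j+1)}$ for granted (citing the preceding discussion, as you do) and then proves the distinctness claim by a direct metric computation: if $x,y\in T^{-1}(B)$ lie in a common ball of radius $p^{-kj}$, then $|x-y|\leq p^{-kj}\leq p^{-k}$, so local scaling gives $|T(x)-T(y)|=p^k|x-y|$; since $T(x),T(y)\in B$ this forces $|x-y|\leq p^{-k(j+1)}$, i.e.\ $x$ and $y$ lie in the same preimage ball. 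You instead deduce distinctness at scale $p^{-kj}$ from the already-established separation at scale $p^{-k}$ (one preimage ball per bijective branch of radius $p^{-k}$) together with the ultrametric fact that a ball of radius $p^{-kj}$ is contained in a unique ball of radius $p^{-k}$. Both routes are valid; yours makes transparent that, for $j\geq1$, the lemma's final clause is an automatic consequence of the branch structure, while the paper's scaling estimate is self-contained and is essentially the same computation that later handles the cross-branch step ($\nu=\nu'$) in the induction proving Lemma~\ref{lem:ballsinsideballs}.

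The one correction: your chain $p^{-k(j+1)}\leq p^{-kj}\leq p^{-k}$ holds for $j\geq1$, not $j\geq0$; at $j=0$ the second inequality fails, and the lemma itself degenerates there ($B=\ZZ_p$ and there is only one ball of radius $1$, so the $p^k$ preimage balls cannot lie in distinct such balls). Since the lemma is only invoked with $j\geq1$ (as the base case of Lemma~\ref{lem:ballsinsideballs}, whose hypothesis is $j\geq1$), this is a bookkeeping slip rather than a gap: just require $j\geq1$ where you invoke the containment of radius-$p^{-kj}$ balls in radius-$p^{-k}$ balls.
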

\begin{proof}
The preceding discussion proves everything except the last statement. 
So, say we have $x$ and $y$ in $T^{-1}(B),$ so that both are in the 
same ball of radius $p^{-kj}.$ Thus, $|x-y|\leq p^{-kj},$  so 
$|T(x)-T(y)|=p^{k}|x-y|.$ But, it must be the case that 
$|T(x)-T(y)|\leq p^{-kj},$ and this only happens when $|x-y|\leq 
p^{-k(j+1)}.$  It follows that $x $ and $y$ are both in the same ball 
of $T^{-1}(B).$
\end{proof}

\begin{lemma}\label{lem:ballsinsideballs}
Let $T:\ZZ_p\to\ZZ_p$ be $(p^{-k},p^k)$-locally scaling. Then, for 
any $j,n\geq1,$ we have that $$T ^{-n}(\Bl{p^{-jk}}{y}) = \bigcup 
_{\nu=1}^{p^{kn}} B _ \nu,$$ where the $B_\nu$ are balls of radius 
$p^{-k(n+j)},$ one inside each ball of radius $p^{-kn}$ in $\ZZ_p.$
\end{lemma}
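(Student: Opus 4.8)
Looking at this, I need to prove Lemma~\ref{lem:ballsinsideballs}, which generalizes Lemma~\ref{lem:bcforballsinsideballs} from a single preimage to $n$-fold preimages.

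The plan is to induct on $n$, using the nesting property for the base case and the bijectivity of $T$ on balls of radius at most $p^{-k}$ to drive the inductive step. For the base case $n=1$, I would apply the nesting property directly to the ball $B=\Bl{p^{-jk}}{y}$, which has radius $p^{-jk}$: it yields $T^{-1}(\Bl{p^{-jk}}{y})=\bigsqcup_{i=1}^{p^k}B_i$ with each $B_i$ of radius $p^{-(jk+k)}=p^{-k(1+j)}$ and the $B_i$ lying in distinct balls of radius $p^{-k}$. Since $\ZZ_p$ contains exactly $p^k$ balls of radius $p^{-k}$, this places one $B_i$ inside each, which is precisely the assertion for $n=1$.

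For the inductive step, suppose the statement holds for some $n\geq1$, so that $T^{-n}(\Bl{p^{-jk}}{y})=\bigcup_\nu B_\nu$ with one ball $B_\nu$ of radius $p^{-k(n+j)}$ inside each ball of radius $p^{-kn}$. Rather than pushing the preimages of the individual $B_\nu$ forward one more step through the nesting property (which only controls their distribution among the coarse balls of radius $p^{-k}$), I would fix an arbitrary ball $D$ of radius $p^{-k(n+1)}$ and determine how many of the desired preimage balls land inside it. The key point is that $D$ has radius at most $p^{-k}$, so by the bijection proposition for locally scaling maps the restriction $T|_D$ is a bijection onto the ball $T(D)$, which has radius $p^k\cdot p^{-k(n+1)}=p^{-kn}$.

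I would then combine $T^{-(n+1)}=T^{-1}\circ T^{-n}$ with this push-forward: a point $x\in D$ lies in $T^{-(n+1)}(\Bl{p^{-jk}}{y})$ if and only if $T(x)\in T^{-n}(\Bl{p^{-jk}}{y})\cap T(D)$. By the inductive hypothesis applied to the radius-$p^{-kn}$ ball $T(D)$, this intersection is exactly one ball $B_{\nu(D)}$ of radius $p^{-k(n+j)}$. Pulling back through the bijection $T|_D$, which scales radii by $p^k$ and hence sends the sub-ball $B_{\nu(D)}$ to a single ball of radius $p^{-k(n+j)}\cdot p^{-k}=p^{-k((n+1)+j)}$, I conclude that $T^{-(n+1)}(\Bl{p^{-jk}}{y})\cap D$ is exactly one ball of the required radius. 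Letting $D$ range over the $p^{k(n+1)}$ balls of radius $p^{-k(n+1)}$ that partition $\ZZ_p$ then assembles the full preimage as a disjoint union of $p^{k(n+1)}$ balls of radius $p^{-k((n+1)+j)}$, one inside each, completing the induction.

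The step I expect to be the main obstacle is the even-distribution claim, namely that each ball of radius $p^{-k(n+1)}$ contains exactly one preimage ball, since the nesting property by itself records distribution only at the coarsest scale $p^{-k}$. The forward-push device, which reduces the count inside $D$ to the inductive hypothesis via the bijection $T|_D$, is what circumvents this difficulty; the accompanying radius bookkeeping (that $T(D)$ has radius $p^{-kn}$ and that pullbacks under $T|_D$ scale by $p^{-k}$) is then routine.
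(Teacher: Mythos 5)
Your proof is correct, and while it follows the paper in inducting on $n$ with the same base case, your inductive step takes a genuinely different route. The paper pushes the decomposition forward: it applies the one-step Lemma~\ref{lem:bcforballsinsideballs} to each ball $B_\nu$ of the inductive decomposition, producing $p^k$ balls $C_\mu^\nu$ for each $\nu$, and then must separately establish the even-distribution claim via a direct scaling computation (if $C_\mu^\nu$ and $C_{\mu'}^{\nu'}$ lie in a common ball of radius $p^{-k(i+1)}$, then applying $T$ shows their images lie within $p^{-ki}$ of each other, forcing $\nu=\nu'$). You instead localize at the target scale: fixing a ball $D$ of radius $p^{-k(n+1)}$, invoking the paper's proposition that $T|_D$ is a bijection onto the radius-$p^{-kn}$ ball $T(D)$, and reading off from the inductive hypothesis that $T^{-n}\left(\Bl{p^{-jk}}{y}\right)\cap T(D)$ is a single ball $B_{\nu(D)}$, whose pullback through $T|_D$ is a single ball of radius $p^{-k(n+1+j)}$. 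This makes the distribution statement automatic---each of the $p^{k(n+1)}$ balls $D$ contributes exactly one preimage ball by construction---thereby eliminating precisely the distinctness argument that occupies the bulk of the paper's inductive step and that you correctly flagged as the main obstacle. The only point you leave implicit is that the inverse of $T|_D$ carries sub-balls of $T(D)$ to balls; this is indeed routine, but to spell it out one should pick $x_0\in D$ with $T(x_0)\in B_{\nu(D)}$ and note that for $x\in D$ one has $T(x)\in B_{\nu(D)}$ if and only if $p^k|x-x_0|\leq p^{-k(n+j)}$, since $T$ scales exactly on $D$ (all points of $D$ are within $p^{-k}$ of each other). In exchange for invoking the ball-bijection proposition, your argument is cleaner; the paper's is more elementary in that it uses only the scaling identity directly.
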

\begin{proof}
We proceed by induction. The base case ($n=1$) is covered by 
Lemma \ref{lem:bcforballsinsideballs}. 

For the inductive step, assume that our lemma holds for $n=i.$ In 
other words, suppose that 
 $$T^{-i}\left(\Bl{p^{-jk}}{y}\right) = \bigcup_{\nu =1}^{p^{ki}} B_\nu$$ 
where the $B_\nu$ are balls of radius $p^{-k(i+j)}$
one each in every ball of radius $p^{-ki}$ of $\ZZ_p.$ Then, 
$T^{-(i+1)}\left(\Bl{p^{-jk}}{y}\right) =\bigcup T^{-1} (B_\nu).$ 
But, by Lemma \ref{lem:bcforballsinsideballs}, for each $\nu,$ we 
have that $T^{-1}(B_\nu) = \bigcup_{\mu=1}^{p^k}C_\mu^\nu,$
where each $C_\mu^\nu$ is a ball of radius $p^{-k(i+j+1)},$ one in 
each ball of radius $p^k$ of $\ZZ_p.$ In particular, for a given 
$\nu,$ the $C_\mu^\nu$ are in distinct balls of radius $p^{-k(i+1)}.$

Finally, we argue that if $C_\mu^\nu$ and $C_{\mu'}^{\nu'}$ are 
contained in the same ball of radius $p^{-k(i+1)}$, 
then it follows that $\nu=\nu'$ (and so $\mu=\mu'$ by the above 
argument). 
Indeed, say $x\in C_\mu^\nu$ and $x'\in C_{\mu'}^{\nu'}$ with 
$|x-x'|\leq p^{-k(i+1)}.$ 
Then $|T(x)-T(x')|\leq p^{-ki}.$ But, $T(x)\in B_\nu$ and 
$T(x')\in B_{\nu'},$ so, since $B_\nu$ and $B_{\nu'}$ are in distinct 
balls of radius $p^{-ki}$ unless $\nu=\nu',$ we find that indeed 
$\nu=\nu'.$

We thus see that 
$$T^{-(i+1)}\left(\Bl{p^{-jk}}{y}\right)=\bigcup_{\nu=1}^{p^{kn}}
\bigcup_{\mu=1}^{p^k} C_\mu^\nu,$$ where the $C_\mu^\nu$ 
are balls of radius $p^{-k(i+j+1)},$ one each in every ball of radius
 $p^{-k(i+1)}$ of $\ZZ_p.$ 
The lemma thus holds for $n=i+1,$ completing the inductive step and 
therefore the proof.
\end{proof}

The preceding results suggest that if $T$ is $(p^{-k},p^k)$ locally 
scaling and $B$ is a ball, taking $T^{-n}(B)$ for larger and larger 
$n$ 
will produce sets that are very well mixed among $\ZZ_p.$ In fact, we
 have already established enough machinery to show that $T$ is  
\emph{mixing} with respect to the standard Haar measure $\mu$ on  
$\ZZ_p.$ The measure $\mu$ is completely specified by defining its  
values on balls: the $\mu$ measure of each ball is defined to be its  
radius. A transformation $T$ is said to be {\it mixing} if  for any  
$\mu$-measurable sets $U $ and $V$
 we have that $\lim_{n\to \infty}\mu(T^{-n}(U)\cap V) = \mu(U) 
 \mu(V).$
 It turns out, however, that it is enough to establish this in the 
 case that $U$ is a ball of radius $p^{-kj}$ 
 and $V$ is a ball of radius $p^{-k\ell}$ for any positive integers 
 $j$ and $\ell$ (see e.g. \cite[Theorem 6.3.4]{Silva08}). Taking 
 $n\geq \ell,$ we find that $T^{-n}(U)=\bigcup B_\nu$ 
 where each $B_\nu$ is a ball of radius $p^{-k(n+j)}$ one each in 
 every ball of radius $p^{-kn}$ in $\ZZ_p.$ But there are 
 $p^{k(n-\ell)}$ balls of radius $p^{-kn}$ in $B,$ each of them 
 containing one $B_\nu.$ 
 Therefore, \begin{align*}\mu(T^{-n}(U)\cap V)&= 
 \mu\left(\bigcup_{\nu=1}^{p^{k(n-\ell)}}B_{\nu}\right)=\sum 
 _{\nu=1}^{p^{k(n-\ell)}} \mu(B_{\nu})\\
 &=p^{k(n-\ell)}p^{-k(n+j)} =p^{-k(\ell+j),}   
     \end{align*} 
which is equal to $\mu(U)\mu(V).$ It follows that $T$ is mixing. 

We conclude this discussion with an example. 
\begin{ex}
Consider the maps $S,T:\ZZ_2\to\ZZ_2$ $S(x) = \binom{x}{2}$ and 
$T(x) =\binom{x}{3}.$
Let us consider the preimages of $2\ZZ_2$ and $1+2\ZZ_2$ (the two 
balls of radius $1/2$ in $\ZZ_2$) 
under $S$ and $T.$

Now, $S(x)=x(x-1)/2.$ One of $x$ and $x-1 $ will cancel out the 
factor of $2$ in the denominator.
 For $S(x)$ to be in $2\ZZ_2,$ we need another factor of $2$ in the  
numerator, meaning that either $x$ or $x-1$ must be equal to $0$  
modulo $4.$ In other words, 
 $S^{-1}(2 \ZZ_2) = 4 \ZZ_2 \cup (1+4\ZZ_2).$
 On the other hand, if
we want to have $S(x) \in 1+2\ZZ_2,$ 
neither $x$ nor $x-1$ can be congruent to $0$ modulo $4.$ In other 
words, $x\equiv 2\text{ or } 3\bmod 4.$ Thus, $S^{-1}(1+2\ZZ_2) = 
(2+4\ZZ_2)\cup (3+4\ZZ_2).$ We can thus see how taking the preimage 
mixes up among the two balls of radius $1/2.$

Let us now consider $T(x)=x(x-1)(x-2)/6.$ 
Notice that $T(x) \in 2\ZZ_2$ precisely if $x\equiv 0\bmod2.$ Indeed, 
we then get two powers of $2$ from $x$ and $x-2,$ one of which 
cancels a power of $2$ from the denominator. Therefore, $T^{-1}(2 
\ZZ_2)=2 \ZZ_2.$ Likewise, $T^{-1}(1+2\ZZ_2) = 1 + 2 \ZZ_2.$ So, in 
general, if $B$ is a ball of radius $1/2$ in $\ZZ_2,$ then for all 
$n>0,$ 
it is the case that $T^{-n}(B) = B.$

This discussion of preimages 
suggests that $S$ is mixing while $T$ is not, and, as we shall see, 
this is indeed the case.
\end{ex}

\subsection{Connections with Bernoulli Maps}
With our setup, we will be able to say that certain maps (namely the
$(p^{-k},p^k)$ locally scaling ones)  in many senses behave ``the 
same''
as the $p$-adic Bernoulli shift (or one of its iterates). To make
these notions 
more precise, we introduce some concepts in topological dynamics.

\begin{defn}
Two maps $T:\ZZ_p\lra\ZZ_p$ and $S:\ZZ_p\lra\ZZ_p$
are said to be (topologically) isomorphic if there exists  a 
homeomorphism $\Phi:\ZZ_p\lra \ZZ_p$ such that $\Phi\circ 
T(x)=S\circ\Phi(x)$ for
all $x.$    In other words, $\Phi$ conjugates the action of $T$ to the action of $S.$
\end{defn}

Close variants of the following theorem and proof   are in 
\cite{polybern}.

\begin{theorem}\label{thm:isombern}
Let $T:\ZZ_p\lra\ZZ_p$ be a $(p^{-k},p^k)$ locally scaling map. Then, 
$T$ is topologically  isomorphic to $\shift^k.$
\end{theorem}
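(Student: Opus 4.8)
The plan is to prove the stronger statement that \emph{every} $(p^{-k},p^k)$ locally scaling map is topologically isomorphic to the one-sided full shift on $p^k$ symbols, and then to note that $\shift^k$ is one such map; the desired $\Phi\colon\ZZ_p\lra\ZZ_p$ is obtained by composing the two resulting conjugacies. Concretely, let $\Sigma=\{0,1,\ldots,p^k-1\}^{\NN}$ be the space of one-sided sequences with the product topology (so that two sequences are close exactly when they agree on a long initial block), and let $\sigma\colon\Sigma\to\Sigma$ be the map deleting the first symbol.

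Given a $(p^{-k},p^k)$ locally scaling map $T$, I would partition $\ZZ_p$ into the $p^k$ balls $B_0,\ldots,B_{p^k-1}$ of radius $p^{-k}$ and define the \emph{itinerary} map $\phi\colon\ZZ_p\to\Sigma$ by $\phi(x)=(c_0,c_1,\ldots)$, where $c_i$ is the index of the ball containing $T^i(x)$. The key step --- and the main obstacle --- is to show that for every finite word $(c_0,\ldots,c_{n-1})$ the cylinder $A_n=\bigcap_{i=0}^{n-1}T^{-i}(B_{c_i})$ is a single ball of radius $p^{-kn}$. I would prove this by induction on $n$: writing $A_{n+1}=B_{c_0}\cap T^{-1}(A'_n)$, where $A'_n$ is the cylinder attached to the shifted word $(c_1,\ldots,c_n)$ and hence a ball of radius $p^{-kn}$ by the inductive hypothesis, the nesting property (Lemma~\ref{lem:bcforballsinsideballs} together with the preimage description preceding it) shows that $T^{-1}(A'_n)$ is a union of $p^k$ balls of radius $p^{-k(n+1)}$, one lying in each ball of radius $p^{-k}$; intersecting with $B_{c_0}$ therefore selects exactly one such ball. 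This is where all the work of the previous subsection is used.

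With this in hand, $\phi$ is injective and continuous, since points with a common itinerary prefix of length $n$ lie in a common ball of radius $p^{-kn}$, and conversely; and it is surjective because for any sequence the nested nonempty balls $A_n$ have radii tending to $0$ and so, by completeness of $\ZZ_p$, shrink to a unique point whose itinerary is that sequence. As $\ZZ_p$ is compact and $\Sigma$ is Hausdorff, $\phi$ is a homeomorphism, and from $T^{i}(T(x))=T^{i+1}(x)$ we obtain $\phi(T(x))=\sigma(\phi(x))$, so $\phi$ conjugates $T$ to $\sigma$. Finally I would apply this construction to both $T$ and $\shift^k$ --- for the latter the itinerary map $\psi$ is simply the regrouping of the base-$p$ digits of $x$ into blocks of length $k$ --- to obtain homeomorphisms $\phi,\psi\colon\ZZ_p\to\Sigma$ with $\phi\circ T=\sigma\circ\phi$ and $\psi\circ\shift^k=\sigma\circ\psi$. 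Setting $\Phi=\psi^{-1}\circ\phi$ then gives a homeomorphism of $\ZZ_p$ satisfying $\Phi\circ T=\psi^{-1}\circ\sigma\circ\phi=\shift^k\circ\psi^{-1}\circ\phi=\shift^k\circ\Phi$, which is exactly the required isomorphism.
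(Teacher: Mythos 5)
Your proof is correct, and once unwound it produces exactly the paper's conjugacy: composing your itinerary map with $\psi^{-1}$ amounts to writing $\Phi(x)$ as the concatenation of the leading $k$-digit blocks of $x, T(x), T^2(x),\ldots$, which is precisely the map the paper defines digit-by-digit via $d_i(x)=(T^q(x))_r$. The organization, however, is genuinely different. The paper never leaves $\ZZ_p$: it defines $\Phi$ directly and then checks the conjugacy identity, continuity, injectivity, and surjectivity one at a time, with surjectivity resting on the iterated-preimage Lemma~\ref{lem:ballsinsideballs} together with a finite-intersection-plus-compactness argument. You instead route through the abstract full shift $(\Sigma,\sigma)$ on $p^k$ symbols and isolate a single structural lemma --- each length-$n$ cylinder $\bigcap_{i<n}T^{-i}(B_{c_i})$ is exactly one ball of radius $p^{-kn}$ --- proved by induction from the one-step preimage description, so you only need Lemma~\ref{lem:bcforballsinsideballs} (and the discussion preceding it), not its iterated form Lemma~\ref{lem:ballsinsideballs}. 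That lemma is sharper than what the paper extracts at the corresponding point and delivers injectivity, continuity, and surjectivity simultaneously; your surjectivity even uses only completeness (nested nonempty balls with radii tending to $0$) rather than compactness, though compactness is still needed, as in the paper, for continuity of the inverse. What your detour buys is the explicit identification of \emph{every} $(p^{-k},p^k)$ locally scaling map with the one-sided Bernoulli shift on $p^k$ symbols, making the word ``Bernoulli'' literal, and a cleaner factorization of the argument; what the paper's direct construction buys is concreteness (no auxiliary symbol space, the conjugating map visible as a $p$-adic expansion) and reuse of Lemma~\ref{lem:ballsinsideballs}, which it needs anyway for the mixing computation.
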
 
\begin{proof}
We must find a homeomorphism  $\Phi:\ZZ_p\lra\ZZ_p$ such that $\Phi 
\circ T=\shift^ k \circ \Phi.$ Consider $$\Phi(x) = \sum_{i=0}^\infty 
d_i(x) p^i, 
$$
where, if $i = q k + r$ for $0\leq r< k,$ we have that $d_i(x) = 
(T^q(x))_r$ (the $r$th digit in the $p$-adic expansion of $T^q(x)$). 
From this definition, it is easy to see that $\Phi \circ T(x)=\shift^ k 
\circ \Phi(x)$ for all $x\in \ZZ_p.$

We now proceed to show that $\Phi,$ as defined, is continuous and 
bijective.  Because $\ZZ_p$ is compact, continuity
of the inverse follows by general facts in topology. Therefore, 
we will have proved that $\Phi$  is a homeomorphism. 

To show that $\Phi$ is continuous, suppose that $|x - y| \leq p^{-k 
\eta}$ for $\eta \geq 1.$ It follows by local scaling that $|T(x) - 
T(y)| \leq p^{-k(\eta-1)},$ and in general,  we can see that for $q 
\leq \eta - 1,$ we have that $|T^q(x) - T^q(y)| \leq p^{-k}.$ 
 Therefore, the $p$-adic expansions of $\Phi(x)$ and $\Phi(y)$ 
agree at least up to the $p^{-(k\eta -1)}$
term, and we see that $|\Phi(x) - \Phi(y) | \leq p^{-k\eta}.$ Continuity 
follows.

To show injectivity, suppose that $\Phi(x)=\Phi(y).$ Then, $|T^i(x) - 
T^i(y)|\leq p^{-k}$ for all $i\geq0.$ But if $|T^i(x) - T^i(y)|\neq0$ 
for some $i,$ it 
follows by local scaling that there is a $j\geq0$ such that 
$|T^{i+j}(x) - T^{i+j}(y)|>p^{-k},$ a contradiction.

Finally, we prove surjectivity. Suppose that $y\in\ZZ_p,$ and we want 
to find an $x\in \ZZ_p$ such that $\Phi(x) = y.$ From the definition 
of $\Phi,$ we 
see that we are looking for an $x$ such that 
\begin{eqnarray*}
x &\in& \Bl{p^{-k}}{y}\\
T(x)&\in&\Bl{p^{-k}}{\shift^k(y)}\\
T^2(x)&\in&\Bl{p^{-k}}{\shift^{2k}(y)}\\
 &\vdots&\\
T^i(x)&\in&\Bl{p^{-k}}{\shift^{ik}(y)}\\
 &\vdots&
 \end{eqnarray*}
 Therefore, to prove that $\Phi$ is surjective, we have to show that 
for all $y\in \ZZ_p,$ the intersection  
$$\bigcap_{i\geq0}T^{-i}\left(\Bl{p^{-k}}{\shift^{ik}(y)}\right)$$ 
is nonempty.
 
 And indeed, by Lemma \ref{lem:ballsinsideballs}, for $i\geq1,$ each  
 ball of  $T^{-i}\left(\Bl{p^{-k}}{\shift^{ik}(y)}\right)$ 
 contains exactly one ball of  
 $T^{-(i+1)}\left(\Bl{p^{-k}}{\shift^{(i+1)k}(y)}\right).$ Furthermore, 
 one ball of $T^{-1}\left(\Bl{p^{-k}}{y}\right)$ is contained in 
 $\Bl{p^{-k}}{y}.$ These considerations show that an intersection of 
 any finite subcollection of the 
 $T^{-i}\left(\Bl{p^{-k}}{\shift^{ik}(y)}\right)$ is nonempty, and 
 therefore, by compactness of $\ZZ_p,$ we have that
  $$\bigcap_{i\geq0}T^{-i}\left(\Bl{p^{-k}}{\shift^{ik}(y)}\right)
\neq\emptyset,$$ as required.
\end{proof}

\subsection{``Small'' Perturbations on $\shift^k$}
At this point in the presentation, we have determined that 
$(p^{-k},p^k)$ maps are Bernoulli.  Furthermore, by understanding the 
Mahler expansion of the shift map $\shift^k,$ as we did earlier, we hope 
to 
understand the scaling properties of polynomial maps (namely, finite
$\ZZ_p$-linear combinations of the $\binom{x}{n}$), and thus, to find 
Bernoulli polynomials. As we will see, we have an infinite class of 
polynomial maps such that any $g$ in this class can be written as a 
sum of $\shift^k$ and a perturbing factor satisfying the Lipschitz 
property (see below).  In turn, this perturbing factor has small 
enough Lipschitz constant so that $g$ is $(p^{-k},p^k)$ locally 
scaling, like $\shift^k.$  We proceed to lay the groundwork for this 
argument.  
\begin{defn}
We say that a function $T:\ZZ_p\lra\ZZ_p$ is $C$-Lipschitz if 
$|T(x)-T(y)|\leq C|x-y|$ for all $x,y\in\ZZ_p.$ 
\end{defn}
If $T$ is $C$-Lipschitz and $a\in\QQ_p,$ it is clear that $aT$ is 
$|a|C$-Lipschitz. Also, because of the strong triangle inequality, if 
$T_i$ is $C_i$-Lipshitz, $\sum T_i$ is $(\sup_i\{C_i\})$-Lipshitz, 
provided this supremum  exists (i.e., the $C_i$ are bounded). 

Lipshitz maps are important in our discussion because they provide us 
with a way of slightly modifying a locally scaling function such that 
the resulting map is still locally scaling. The following proposition 
demonstrates one such method.  
\begin{prop}\label{prop:perturbscaling}
Let $T:\ZZ_p\lra\ZZ_p$ be $(r,C)$ locally scaling, $S:\ZZ_p\lra\ZZ_p$ 
be $D$-Lipshitz with $D<C,$ and suppose that $u\in\ZZ_p^\times.$ Then 
$\tilde T=uT+S$ is $(r,C)$-locally scaling.
\end{prop}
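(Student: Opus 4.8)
The plan is to show directly that $\tilde T = uT + S$ satisfies the defining equality $|\tilde T(x) - \tilde T(y)| = C|x-y|$ whenever $|x-y| \leq r$. First I would fix $x, y \in \ZZ_p$ with $|x-y| \leq r$ and write
\[ \tilde T(x) - \tilde T(y) = u\bigl(T(x) - T(y)\bigr) + \bigl(S(x) - S(y)\bigr). \]
The strategy is to treat $u(T(x) - T(y))$ as the dominant term and $S(x) - S(y)$ as a strictly smaller perturbation, then invoke the ultrametric fact recorded earlier in the excerpt: if $|a| > |b|$, then $|a + b| = |a|$.

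The two estimates I need are as follows. Since $T$ is $(r,C)$ locally scaling and $|x-y| \leq r$, I have $|T(x) - T(y)| = C|x-y|$ exactly. Because $u \in \ZZ_p^\times$, we have $|u| = 1$, so $|u(T(x) - T(y))| = |u|\,C|x-y| = C|x-y|$. On the other side, $S$ is $D$-Lipschitz, so $|S(x) - S(y)| \leq D|x-y|$. The hypothesis $D < C$ then gives
\[ |S(x) - S(y)| \leq D|x-y| < C|x-y| = |u(T(x) - T(y))|, \]
provided $|x-y| \neq 0$; the case $x = y$ is trivial since both sides of the desired equality vanish. So for $x \neq y$ the first term strictly dominates the second in $p$-adic absolute value.

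Applying the dominance principle to the sum yields $|\tilde T(x) - \tilde T(y)| = |u(T(x) - T(y))| = C|x-y|$, which is exactly the locally scaling condition with the same radius $r$ and constant $C$. This completes the argument.

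I expect the only point requiring care is the strictness of the inequality $|S(x)-S(y)| < |u(T(x)-T(y))|$: the ultrametric identity $|a+b| = |a|$ needs $|a| > |b|$ with strict inequality, and this is precisely where the hypothesis $D < C$ (rather than $D \leq C$) is used, together with the fact that $T$ scales by \emph{exactly} $C$ rather than merely at most $C$. The subtlety is that $D$ and $C$ are fixed constants while $|x-y|$ ranges over values $\leq r$; one must confirm that $D|x-y| < C|x-y|$ holds for every nonzero admissible $|x-y|$, which is immediate upon cancelling the common positive factor $|x-y|$. There is no genuine obstacle here, only the need to keep the strict versus non-strict inequalities straight and to isolate the trivial case $x = y$.
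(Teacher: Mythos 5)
Your proof is correct and follows essentially the same route as the paper's: decompose $\tilde T(x)-\tilde T(y)$ into $u(T(x)-T(y))+(S(x)-S(y))$, note $|u|=1$, and apply the ultrametric dominance principle $|a+b|=|a|$ when $|a|>|b|$. Your explicit handling of the trivial case $x=y$ is a small point of care the paper's one-line estimate silently glosses over, but otherwise the arguments coincide.
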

\begin{proof}
Take $x$ and $y$ with $|x-y|\leq r.$ Then 
\begin{eqnarray}
	|\tilde T(x)-\tilde T(y)|&=&|(uT(x)+S(x))-(uT(y)+S(y))|\\
&=&|u(T(x)-T(y))+(S(x)-S(y))|.
\end{eqnarray}
But, $|u(T(x)-T(y))|=C|x-y|>D|x-y|\geq |S(x)-S(y)|.$  Therefore, 
$|\tilde T(x)-\tilde T(y)|=C|x-y|,$ as desired.
\end{proof}

In particular, if $\tilde f(x)=\shift^k(x)+b(x)$ where $b$ is 
$p^{k-1}$-Lipshitz, then $\tilde f$ is $(p^{-k},p^k)$ locally 
scaling, and hence Bernoulli. This seemingly uninspiring condition is 
actually extremely powerful because it gives us  sufficient 
conditions on the Mahler expansion of a continuous map 
$T:\ZZ_p\lra\ZZ_p$ for the map to be isomorphic to $\shift^k$ for some 
$k.$  

Before stating the result, however, we need the following lemma, 
which we do not  prove here:
\begin{lemma}\label{lem:bclip}
The $\ZZ_p$ map $x\mapsto {x\choose n}$ is
$p^{\floor{\log_pn}}$-Lipshitz for all $n.$ 
\end{lemma}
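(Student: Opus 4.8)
The plan is to avoid the crude estimate that would come directly from $\binom{x}{n} = x(x-1)\cdots(x-n+1)/n!$, since clearing the denominator only gives the much weaker Lipschitz constant $p^{v_p(n!)}$, and instead to isolate a single $p$-adic increment by means of Vandermonde's convolution identity. Writing $y = x+h$ with $h = y-x$, I would begin from
\[ \binom{x+h}{n} = \sum_{j=0}^{n} \binom{x}{j}\binom{h}{n-j}, \]
which is valid for all $x,h \in \ZZ_p$ because it is an identity of polynomials in two variables (it holds for all nonnegative integers, hence everywhere). Peeling off the $j=n$ term, which is exactly $\binom{x}{n}\binom{h}{0} = \binom{x}{n}$, yields $\binom{y}{n}-\binom{x}{n} = \sum_{j=0}^{n-1}\binom{x}{j}\binom{h}{n-j}$.

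Next, since every Mahler basis element satisfies $\binom{x}{j}\in\ZZ_p$ and hence $|\binom{x}{j}|\leq 1$, the strong triangle inequality collapses the whole problem onto a bound for a single binomial coefficient of the small argument $h$:
\[ \left|\binom{y}{n}-\binom{x}{n}\right| \leq \max_{1\leq i\leq n}\left|\binom{h}{i}\right|, \]
where $i = n-j$. It therefore suffices to prove $|\binom{h}{i}| \leq p^{\floor{\log_p n}}\,|h|$ for every $1\leq i\leq n$ and every $h\in\ZZ_p$; with $|h| = |x-y|$ the Lipschitz claim follows at once.

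The heart of the argument, and the step I expect to require the most care, is this valuation estimate. I would set $s = v_p(h)$ (the case $h=0$ being trivial) and show $v_p\!\left(\binom{h}{i}\right) \geq s - \floor{\log_p n}$. When $s \leq \floor{\log_p n}$ the bound is free, since $\binom{h}{i}\in\ZZ_p$ forces $v_p(\binom{h}{i})\geq 0 \geq s-\floor{\log_p n}$. The interesting case is $s > \floor{\log_p n}$, equivalently $p^s > n \geq i$, where the gain appears: for each $1\leq l\leq i-1$ one has $0 < l < p^s$, so $v_p(l) < s = v_p(h)$ and hence $v_p(h-l) = v_p(l)$ exactly. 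Summing over the factors of the numerator $h(h-1)\cdots(h-i+1)$ and subtracting $v_p(i!)$, the $v_p((i-1)!)$ terms cancel and I am left with $v_p(\binom{h}{i}) = s - v_p(i)$.

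Finally, I would finish by observing that $p^{v_p(i)}\leq i$ gives $v_p(i)\leq \floor{\log_p i}\leq \floor{\log_p n}$, whence $v_p(\binom{h}{i}) = s - v_p(i) \geq s - \floor{\log_p n}$, i.e. $|\binom{h}{i}|\leq p^{\floor{\log_p n}}\,|h|$, as needed. The one subtlety to pin down is the exact cancellation in the valuation of the numerator, which succeeds precisely because $h$ is $p$-adically smaller than each of $1,\dots,i-1$ once $p^s > n$; this is exactly the mechanism that replaces the wasteful $v_p(n!)$ by the sharp exponent $\floor{\log_p n}$.
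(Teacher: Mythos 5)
Your proof is correct, and it fills a gap the paper deliberately leaves open: the paper's entire ``proof'' of Lemma~\ref{lem:bclip} is the citation ``See \cite[pg.~227]{robert},'' with no argument reproduced, so you have supplied a self-contained elementary proof where the paper defers wholesale to Robert's textbook. Each of your steps checks out: the Vandermonde identity $\binom{x+h}{n}=\sum_{j=0}^{n}\binom{x}{j}\binom{h}{n-j}$ is a two-variable polynomial identity verified on the nonnegative integers, hence valid on all of $\ZZ_p$; the reduction to bounding a single $\left|\binom{h}{i}\right|$ is legitimate because $\left|\binom{x}{j}\right|\leq 1$; and your valuation computation in the case $s=v_p(h)>\floor{\log_p n}$ is exact, since $p^s>n\geq i$ forces $v_p(h-l)=v_p(l)$ for $0<l<i$, yielding $v_p\bigl(\binom{h}{i}\bigr)=s-v_p(i)\geq s-\floor{\log_p n}$. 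What your route buys, beyond self-containedness, is the precise valuation $s-v_p(i)$ rather than a mere inequality (which in particular exhibits sharpness of the constant at $n=p^k$, the case the paper actually uses in Theorem~\ref{thm:cbls}); what the paper's route buys is only brevity. One streamlining worth knowing: the heart of your argument, the estimate $\left|\binom{h}{i}\right|\leq p^{\floor{\log_p i}}\,|h|$, follows in one line and with no case split from the absorption identity $\binom{h}{i}=\frac{h}{i}\binom{h-1}{i-1}$, since $\binom{h-1}{i-1}\in\ZZ_p$ gives $\left|\binom{h}{i}\right|\leq |h|\,p^{v_p(i)}\leq |h|\,p^{\floor{\log_p i}}$ unconditionally; this dispenses with both the dichotomy on $s$ and the factor-by-factor cancellation, at the modest cost of losing the exact valuation.
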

\begin{proof}
See \cite[pg. 227]{robert}.
\end{proof}

\begin{defn}\label{cbc}
Let $T(x)$ be given by the uniformly converging Mahler series 
$$T(x)=\sum_{n=0}^\infty
A_n{x\choose n}$$ and set $C=\max_n p^{\floor{\log_pn}}|A_n|.$ 
Suppose   that the following conditions hold:

\begin{itemize}
\item There exists a unique $n_0$ which attains this maximum;
\item $n_0=p^k$ for some $k>0$; 
\item $|A_{n_0}|=1$ (thus, $C=n_0=p^k$).

\end{itemize}
Then, we say that $T$ is in the \emph{{\cbc}}. \end{defn}
\begin{theorem}\label{thm:cbls}
Let  $T$ be in the {\cbc}, with $C=p^k$ as in the definition. Then  
$T$ is $(p^{-k},p^k)$ locally scaling and hence isomorphic to $\shift^k.$
\end{theorem}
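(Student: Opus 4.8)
The plan is to write $T$ as $u\,\shift^k + S$, where $u\in\ZZ_p^\times$ and $S$ is a Lipschitz perturbation whose constant is strictly below $p^k$, and then to invoke Proposition~\ref{prop:perturbscaling} (with locally scaling map $\shift^k$, unit $u$, and perturbation $S$) followed by Theorem~\ref{thm:isombern}. Since $|A_{p^k}|=1$, the coefficient $u:=A_{p^k}$ lies in $\ZZ_p^\times$. I would set $S:=T-u\,\shift^k$ and aim to show that $S$ is $D$-Lipschitz for some $D<p^k=C$; granting this, Proposition~\ref{prop:perturbscaling} yields that $T$ is $(p^{-k},p^k)$ locally scaling, and Theorem~\ref{thm:isombern} then delivers the topological isomorphism with $\shift^k$.

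First I would compute the Mahler expansion of $S$. Writing $a_n^{(k)}$ for the Mahler coefficients of $\shift^k$ as in Theorem~\ref{thm:mahlercoeffsTOLY}, the coefficients of $S$ are $B_n:=A_n-u\,a_n^{(k)}\in\ZZ_p$. Because $|A_n|\to 0$ (as $T$ is continuous) and $|a_n^{(k)}|\to 0$, the strong triangle inequality gives $|B_n|\to 0$, so $S$ is a well-defined continuous map $\ZZ_p\to\ZZ_p$. By Lemma~\ref{lem:bclip} each summand $B_n\binom{x}{n}$ is $|B_n|\,p^{\floor{\log_p n}}$-Lipschitz, and by additivity of Lipschitz constants under the strong triangle inequality, $S$ is $D$-Lipschitz with $D=\sup_n p^{\floor{\log_p n}}|B_n|$, provided this supremum is finite.

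The crux is to prove $D\le p^{k-1}$. The term $n=p^k$ is the one that motivates the whole setup: there $B_{p^k}=A_{p^k}-u\,a_{p^k}^{(k)}=u-u\cdot 1=0$ by Theorem~\ref{thm:mahlercoeffsTOLY}(ii). For $n\ne p^k$ I would apply the strong triangle inequality together with $|u|=1$ to get
\[ p^{\floor{\log_p n}}|B_n| \le \max\left( p^{\floor{\log_p n}}|A_n|,\ p^{\floor{\log_p n}}|a_n^{(k)}| \right), \]
and then bound each entry separately. The first entry is controlled by the \cbc{} hypothesis: $n_0=p^k$ is the \emph{unique} maximizer of $p^{\floor{\log_p n}}|A_n|$, so for $n\ne p^k$ this quantity is strictly less than $p^k$; being a power of $p$ (or zero), it is therefore at most $p^{k-1}$. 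The second entry is controlled by Corollary~\ref{cor:shiftcoeffs}, which states that $p^{\floor{\log_p n}}|a_n^{(k)}|$ attains its maximum $p^k$ only at $n=p^k$, so it too is at most $p^{k-1}$ for $n\ne p^k$. Taking the maximum gives $p^{\floor{\log_p n}}|B_n|\le p^{k-1}$ for every $n$, so the supremum is finite and $D\le p^{k-1}<p^k$.

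The main obstacle is exactly this uniqueness step. The strict drop $p^{\floor{\log_p n}}|A_n|<p^k$ for $n\ne p^k$ (which forces the value down to $p^{k-1}$) is the conceptual heart of the argument, since it is precisely where the single-peak structure demanded by the \cbc{} is used to push the perturbation $S$ strictly inside the Lipschitz threshold $D<C$ required by Proposition~\ref{prop:perturbscaling}. Everything after that is formal: with $D\le p^{k-1}<p^k=C$ and $u\in\ZZ_p^\times$, Proposition~\ref{prop:perturbscaling} shows $T=u\,\shift^k+S$ is $(p^{-k},p^k)$ locally scaling, and Theorem~\ref{thm:isombern} completes the proof.
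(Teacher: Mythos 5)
Your proposal is correct and follows essentially the same route as the paper's own proof: both pick the unit $u=A_{p^k}$ using $a_{p^k}^{(k)}=1$, bound the difference coefficients $A_n-u\,a_n^{(k)}$ via the strong triangle inequality, the \cbc{} uniqueness condition, and Corollary~\ref{cor:shiftcoeffs}, and then combine Lemma~\ref{lem:bclip} with Proposition~\ref{prop:perturbscaling} to conclude $(p^{-k},p^k)$ local scaling, with Theorem~\ref{thm:isombern} finishing the isomorphism. Your explicit remark that a value strictly below $p^k$ that is a power of $p$ (or zero) must drop to $p^{k-1}$ makes the paper's implicit step nicely precise, but the argument is the same.
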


\begin{proof}
A variant of this theorem is in 
\cite{polybern}.  The proof that follows uses the Mahler expansion of $\shift^k,$
and is original to this paper.

The idea of the proof is that by the {\cbc} conditions, the 
$A_{p_k} \binom{x}{p^k}$ term will be the dominant one in determining the scaling
behavior of $T.$  Indeed, as we will see below, after multiplying by a unit as appropriate, 
$T$ and $\shift^k$ differ by a $p^{k-1}$-Lipshitz component; this follows by the {\cbc} conditions,
as well as the tight control guaranteed by Lemma \ref{lem:bclip}. The details of this argument follow. 

Recall that for the map $\shift^k,$ we have that  $a^{(k)}_{p^k}=1.$
 Therefore, the fact that $|A_{p_k}|=1$ in the 
Mahler expansion of $T$   implies that there exists
$u\in\ZZ_p^\times$ such that $u a_{p^k}^{(k)}=A_{p_k}.$ Consider
the general term $b_n=ua_n^{(k)}-A_n.$ By choice of $u,$
we have $b_{p^k}=0.$ Furthermore, the  strong triangle inequality tells us that
$|b_n|\leq\max\{|a_n^{(k)}|,A_n\}.$ Regardless of what the
maximum is for a particular $n\neq p^k,$
it is the case that $|b_n|p^{\floor{\log_pn}}<p^k$ by definition and Corollary
\ref{cor:shiftcoeffs}. Therefore, using Lemma \ref{lem:bclip}, we see
that $b_n{x\choose n}$ is $p^{k-1}$-Lipshitz for all $n$ (since
$b_{p^k}=0,$ the claim is trivial for $b=p^k$).

Hence, $u\shift^k(x)-T(x)=\sum_{n=0}^\infty b_n{x\choose n}$ is
$p^{k-1}$-Lipshitz. This implies, by Proposition 
\ref{prop:perturbscaling}, that $T$ is $(p^{-k},p^k) $ locally
scaling and the theorem follows.   
\end{proof}

\begin{rem}
The reader may (rightly) point out that proving the local scaling
properties of  maps like $\binom{x}{p^k}$ 
should not be much harder than proving Lipschitz properties of the 
$\binom{x}{n}.$ Indeed,   this was done directly in \cite{polybern}. 
Going through the Mahler expansion of the shift map
provides us with not as much a new proof of the local scaling 
properties, but an interpretation of the maps in the {\cbc}, which 
satisfy them.

\end{rem}

\section{Other Related Maps}\label{sec:related}
The $p$-adic shift is actually a special case of another natural 
class of maps. First off, define $g:\QQ_p\lra\ZZ_p$ as follows. Given
$x\in\QQ_p,$ we can express it uniquely as  $x=\sum_{i=\ell}^\infty
b_ip^i$ where $\ell\leq0$ and $b_i\in\{0,1,\ldots,p-1\}$ for all $i.$

 With our setup notation, we let $g(x)=\sum_{i=0}^\infty b_ip^i.$ In 
 other words, $g$ chops off the negative powers of $p$ in the 
 $p$-adic expansion of $x.$

Take $a\in\QQ_p.$ Define $f_a:\ZZ_p\lra\ZZ_p$ by $f_a(x)=g(ax)$ where 
$g$ is as above. Notice that the $p$-adic shift is $f_a$ where 
$a=1/p.$ 

We will mostly be interested in $f_a$ with $|a|>1.$ Now, $|a|>1$ 
implies that $a=a'/p^k$ for $a'\in \ZZ_p^\times$ and $k>0.$
Therefore, for $x\in\ZZ_p,$ $f_a(x)=g(a'x/p^k)$ where $a'x\in\ZZ_p$ 
and so, $f_a(x)=\shift^k(a'x).$ 

We now make a few remarks about the topological dynamics of the maps 
$f_a.$


For $a$ with $|a|<1,$ we have that $f_a(\ZZ_p)\subsetneq\ZZ_p,$ so
$f_a$ is not surjective, and 
cannot be isomorphic to the shift.

 The situation when $|a|>1$ is quite different.

\begin{theorem}\label{thm:famp}
For $|a|=p^k,$ with $k>0,$ $f_a$ is $(p^{-k},p^k)$ locally scaling, 
and hence isomorphic to  $\shift^k.$
\end{theorem}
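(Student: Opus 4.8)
The plan is to reduce the statement to facts already established in the paper, rather than re-deriving local scaling from scratch.

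The plan is to reduce the statement entirely to facts already established in the paper, rather than re-derive local scaling from scratch. The crucial observation, made just before the theorem in the surrounding discussion, is that for $|a| = p^k$ with $k > 0$ we may write $a = a'/p^k$ with $a' \in \ZZ_p^\times$, and then $f_a(x) = \shift^k(a'x)$ for all $x \in \ZZ_p$. So $f_a$ is nothing but $\shift^k$ precomposed with multiplication by the unit $a'$. Since we already know $\shift^k$ is $(p^{-k}, p^k)$ locally scaling and that every $(p^{-k}, p^k)$ locally scaling map is topologically isomorphic to $\shift^k$ (Theorem~\ref{thm:isombern}), it suffices to check that precomposition by a unit preserves the local scaling property.

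First I would record the isometry property of unit multiplication: since $a' \in \ZZ_p^\times$ means $|a'| = 1$, we have $|a'x - a'y| = |a'|\,|x-y| = |x-y|$ for all $x, y$. Thus the map $x \mapsto a'x$ preserves $p$-adic distances exactly; in particular it sends any pair with $|x-y| \leq p^{-k}$ to a pair with $|a'x - a'y| \leq p^{-k}$.

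Second I would verify local scaling by a direct computation. Take $x, y \in \ZZ_p$ with $|x - y| \leq p^{-k}$. Applying the isometry observation and then the fact that $\shift^k$ is $(p^{-k}, p^k)$ locally scaling to the pair $a'x, a'y$, I obtain
\[
|f_a(x) - f_a(y)| = |\shift^k(a'x) - \shift^k(a'y)| = p^k\,|a'x - a'y| = p^k\,|x - y|,
\]
which is exactly the statement that $f_a$ is $(p^{-k}, p^k)$ locally scaling. Finally I would invoke Theorem~\ref{thm:isombern} to conclude that $f_a$ is topologically isomorphic to $\shift^k$.

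There is no substantive obstacle here; the entire content is the isometry observation that multiplication by a unit leaves the scaling constant unchanged, so that composing an $(p^{-k}, p^k)$ locally scaling map with a unit-scaling isometry yields another such map. The only point requiring care is to apply the scaling hypothesis of $\shift^k$ to the \emph{transformed} pair $a'x, a'y$ rather than to $x, y$ directly, which is legitimate precisely because the transformation preserves the hypothesis $|{\cdot} - {\cdot}| \leq p^{-k}$.
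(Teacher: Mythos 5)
Your proof is correct and follows essentially the same route as the paper: it uses the decomposition $f_a(x)=\shift^k(a'x)$ with $a'\in\ZZ_p^\times$, the isometry of multiplication by a unit, the $(p^{-k},p^k)$ local scaling of $\shift^k$, and then Theorem~\ref{thm:isombern}. Your added remark about applying the scaling hypothesis to the transformed pair $a'x,a'y$ makes explicit a point the paper leaves implicit, but there is no substantive difference.
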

\begin{proof}
We know that $f_a(x)=\shift^k(a'x)$ where $a'\in\ZZ_p^\times.$  Take $x$ 
and $y$ with $|x-y|\leq p^{-k}.$ Then, $|a'x-a'y|=|x-y|\leq p^{-k}.$ 
Therefore, $|\shift^k(a'x)-\shift^k(a'y)|=p^k|a'x-a'y|=p^k|x-y|$ and so $f_a$ 
is $(p^{-k},p^k)$ locally scaling. 
\end{proof}

\bibliographystyle{amsplain}
\bibliography{ergodictheory}

\providecommand{\bysame}{\leavevmode\hbox to3em{\hrulefill}\thinspace}
\providecommand{\MR}{\relax\ifhmode\unskip\space\fi MR }
\providecommand{\MRhref}[2]{%
  \href{http://www.ams.org/mathscinet-getitem?mr=#1}{#2}
}
\providecommand{\href}[2]{#2}
\begin{thebibliography}{1}

\bibitem{bs05}
John Bryk and Cesar~E. Silva, \emph{Measurable dynamics of simple {$p$}-adic
  polynomials}, Amer. Math. Monthly \textbf{112} (2005), no.~3, 212--232.

\bibitem{elkiesmahler}
Noam~D. Elkies, \emph{Mahler's theorem on continuous $p$-adic maps via
  generating functions}, {\tt
  http://www.math.harvard.edu/$\sim$elkies/Misc/mahler.pdf}.

\bibitem{gouvea}
Fernando~Q. Gouv{\^e}a, \emph{{$p$}-adic numbers}, Universitext,
  Springer-Verlag, Berlin, 1993, An introduction. \MR{MR1251959 (95b:11111)}

\bibitem{Khr04}
Andrei~Yu. Khrennikov and Marcus Nilsson, \emph{$p$-adic deterministic and
  random dynamics}, Kluwer, 2004.

\bibitem{polybern}
James Kingsbery, Alex Levin, Anatoly Preygel, and Cesar~E. Silva, \emph{On
  measure-preserving {$C\sp 1$} transformations of compact-open subsets of
  non-{A}rchimedean local fields}, Trans. Amer. Math. Soc. \textbf{361} (2009),
  no.~1, 61--85. \MR{MR2439398}

\bibitem{robert}
Alain~M. Robert, \emph{A course in {$p$}-adic analysis}, Graduate Texts in
  Mathematics, vol. 198, Springer-Verlag, New York, 2000. \MR{MR1760253
  (2001g:11182)}

\bibitem{Silva08}
C.~E. Silva, \emph{Invitation to ergodic theory}, Student Mathematical Library,
  vol.~42, American Mathematical Society, Providence, RI, 2008.

\bibitem{silverman}
Joseph~H. Silverman, \emph{The arithmetic of dynamical systems}, Graduate Texts
  in Mathematics, vol. 241, Springer, New York, 2007. \MR{MR2316407
  (2008c:11002)}

\bibitem{WoodcockSmart}
Christopher~F. Woodcock and Nigel~P. Smart, \emph{{$p$}-adic chaos and random
  number generation}, Experiment. Math. \textbf{7} (1998), no.~4, 333--342.

\end{thebibliography}
\end{document}